\documentclass[10pt,a4paper]{amsart}
\usepackage{amsmath, graphicx, subfigure, epsfig,amssymb}
\usepackage{xcolor}
\numberwithin{equation}{section}

\newcommand{\mc}{\nu}
\newcommand{\e}{\epsilon}

\newcommand{\de}{\delta}
\newcommand{\br}{\mathbb{R}}
\newcommand{\ik}{\varphi}
\newcommand{\pa}{\partial}
\newcommand{\bt}{\beta}
\newcommand{\al}{\alpha}
\newcommand{\la}{\lambda}
\newcommand{\om}{\omega}

\newcommand{\coi}{C_0^{\infty}}
\newcommand{\ioi}{\int_0^{\infty}}
\newcommand{\oy}{\overline Y}
\newcommand{\xh}{x_\e}
\newcommand{\be}{\begin{equation}}
\newcommand{\ee}{\end{equation}}

\newcommand{\nal}{n}
\newcommand{\us}{\mathcal U}
\newcommand{\vs}{\mathcal V}
\newcommand{\ir}{\mathcal C}

\newcommand{\s}{\mathcal S}
\newcommand{\T}{\mathcal T}
\newcommand{\R}{\mathcal R}

\theoremstyle{theorem}
\newtheorem{theorem}{Theorem}
\newtheorem*{remark}{Remark}
\newtheorem{lemma}{Lemma}

\newtheorem{definition}{Definition}

\begin{document}

\title[Analysis of resolution]{Resolution analysis of inverting the generalized Radon transform from discrete data in $\mathbb R^3$}
\author[A Katsevich]{Alexander Katsevich$^1$}
\thanks{$^1$Department of Mathematics, University of Central Florida, Orlando, FL 32816.\\ 
This work was supported in part by NSF grant DMS-1615124.}

\begin{abstract}
A number of practically important imaging problems involve inverting the generalized Radon transform (GRT) $\R$ of a function $f$ in $\br^3$. On the other hand, not much is known about the spatial resolution of the reconstruction from discretized data. In this paper we study how accurately and with what resolution the {\it singularities} of $f$ are reconstructed. The GRT integrates over a fairly general family of surfaces $\s_y$ in $\br^3$. Here $y$ is the parameter in the data space, which runs over an open set $\vs\subset\br^3$. Assume that the data $g(y)=(\R f)(y)$ are known on a regular grid $y_j$ with step-sizes $O(\e)$ along each axis, and suppose $\s=\text{singsupp}(f)$ is a piecewise smooth surface. Let $f_\e$ denote the result of reconstruction from the discrete data. We obtain explicitly the leading singular behavior of $f_\e$ in an $O(\e)$-neighborhood of a generic point $x_0\in\s$, where $f$ has a jump discontinuity. We also prove that under some generic conditions on $\s$ (which include, e.g. a restriction on the order of tangency of $\s_y$ and $\s$), the singularities of $f$ do not lead to non-local artifacts. For both computations, a connection with the uniform distribution theory turns out to be important. Finally, we present a numerical experiment, which demonstrates a good match between the theoretically predicted behavior and actual reconstruction.
\end{abstract}
\maketitle

\section{Introduction}\label{sec_intro}

A large number of practically important imaging problems involve inversion of the generalized Radon transform (GRT), i.e. recovering an unknown function $f$ from its integrals over a family of surfaces. The reconstruction may involve finding $f$ itself, or finding $f$ modulo smoother terms. Most of the times, the surfaces are not planes. Below is a list of some of the most common integral transforms with some of the most prominent examples of their use.   
\begin{enumerate}
%\item The classical Radon transform integrates over lines in $\br^2$, and over planes and hyperplanes -- in higher dimensions \cite{nat3}. Such transforms arise in various applications of CT, e.g. medical imaging, security scanning, non-destructive testing, and many others.
%\item Generalized Radon transform of dynamic objects (integration over lines or surfaces determined by the deformation of the object). This transform arises, for example, in medical imaging when scanning the cardiac region of the patient \cite{tcst12}. 
\item Integration over spheres. Applications include ultrasound imaging (or, SONAR) (see \cite{qrs11} and references therein) as well as thermoacoustic and photoacoustic tomography \cite{kk15, wa15}. 
\item Integration over ellipses. This transform arises in linearized seismic imaging with a common offset between the sources and receivers \cite{gkqr18}. 
\item Integration over cones arises in Compton camera imaging. Applications are single-scattering optical tomography, Compton camera medical imaging, and homeland security (see \cite{tkk18} for a recent review).
\end{enumerate}

In all of the above cases, one collects a discrete data set and reconstructs $f$ using a numerical algorithm. Frequently, reconstruction is achieved by applying a linear inversion formula (as opposed to a non-linear reconstruction algorithm based on fidelity functional minimization). In all of the above examples it is of fundamental importance to know the resolution of the method as a function of (1) the data sampling rate, and (2) specific implementation of the inversion formula that is used. Despite the significance of this problem, not much is known about the resolution of reconstruction from discrete data. The main reason for this is that the classical sampling theory, which addresses such problems, can be applied only in a few simplest settings of the classical Radon transform (CRT) \cite{nat3}. 
%For none of the above transforms there exists a comprehensive analysis of resolution based on the classical sampling theory. 
The known results are quite scarce, and they are of semi-qualitative nature (see e.g. pp. 784--786 in \cite{cb15}). 
Very recently, a more flexible approach to sampling based on semiclassical analysis was proposed in \cite{stef18}. Let $A$ be a Fourier Integral Operator (FIO). The idea of \cite{stef18} is to determine how the data $Af$ should be sampled to allow for accurate 
%(i.e., up to the terms of order $O(\e^\infty)$) 
interpolation of its values on a lattice provided that $f$ is semiclassically bandlimited. If the sampling condition is violated, then reconstruction from the discrete values of $Af$ (i.e., applying a parametrix $A^{-1}$ to the interpolated $Af$) leads to aliasing artifacts, which are also analyzed in \cite{stef18}.

An alternative approach to the analysis of resolution was proposed recently in \cite{kat_2017, kat19a}. The idea is to investigate how accurately and with what resolution the {\it singularities} of $f$ are reconstructed. For some of the above problems there is no exact inversion formula, and inversion modulo smoother terms is the most one can hope for. In such cases, spatial resolution of the recovery of singularities is all one needs. Note that in this paper both 
$f$ and $g=\R f$ are assumed to have singularities in the sense of a conventional, classical wavefront set (see e.g. \cite{hor}). 
In contrast, the main assumption in \cite{stef18} is that $f$ and, consequently, the data $Af$ have only semiclassical singularities (see e.g. \cite{zwor12}). It is possible to apply the approach of \cite{stef18} to the analysis of classical singularities, but this would require summing a series over  ``folded'' frequencies in the Fourier domain, which is complicated. 

In \cite{kat_2017, kat19a} the author considers the inversion of the CRT of $f$ in $\br^2$ and $\br^3$.  The parametrization of the data is standard, i.e. in terms of the affine and angular variables. Suppose the step-sizes along the angular and affine variables are $O(\e)$. Let $f_\e$ denote the result of reconstruction from the discrete data. The author picks a point $x_0$, where $f$ has a jump singularity, and obtains explicitly the leading singular behavior of $f_\e$ in an $O(\e)$-neighborhood of $x_0$ as $\e\to0$. The obtained behavior, which we call {\it edge response}, provides the desired resolution of the reconstruction algorithm. It is shown also that convex parts of the singular support of $f$ do not create non-local artifacts. The case when $f$ changes during the scan (so-called, dynamic CT) is considered in the 2D setting as well \cite{kat_2017}. 

In this paper we generalize the approach of \cite{kat_2017, kat19a}. The reconstruction problem is now formulated in terms of the GRT $\R$, which integrates $f$ over a fairly general family of surfaces $\s_y$ in $\br^3$. Here $\text{supp}(f)\subset\us$, where $\us\subset\br^3$ is an open set, and $y$ is the parameter in the data space. For the problem to be well-determined, we assume that $y$ runs over an open set $\vs\subset\br^3$. As is seen, our setting is fairly general and covers all the problems mentioned above. The GRT in this paper is very close to that considered by Beylkin in \cite{belk}, only the parametrization of the surfaces $\s_y$ is slightly different. This gives us more flexibility to connect our results with practical applications, where GRTs arise.

Assume that the data $g=\R f$ are known on a regular grid $y_j$ with step-sizes $O(\e)$ along each axis. Suppose $\s=\text{singsupp}(f)$ is a piecewise smooth surface. Similarly to \cite{kat_2017, kat19a}, we obtain explicitly the leading singular behavior of $f_\e$ in an $O(\e)$-neighborhood of a generic point $x_0\in\s$, where $f$ has a jump discontinuity. We also prove that under some generic conditions on $\s$ (which include, e.g. a restriction on the order of tangency of $\s_y$ and $\s$), the singularities of $f$ do not lead to non-local artifacts. For both computations, a connection with the uniform distribution theory \cite{KN_06} turns out to be important. It is possible that violation of the imposed conditions leads to artifacts. Analysis of such artifacts and analysis of more general surfaces $\s$ will be the subject of future research. 
%Comprehensive analysis of all such cases is beyond the scope of this paper. 

The reconstruction formula $g\to \check f$, which contains a suitably adapted adjoint $\R^*$, is one specific example of an FIO. Here $\check f$ is such that $\check f-f$ is smoother than $f$. Thus, the reconstruction algorithm can be viewed as an application of an FIO to discrete data $(\R f)(y_j)$. A number of methods for computing the action of FIOs on discrete data have been proposed, see e.g. \cite{cdy07, cdy09, ahw12, yang15} and references therein. To the best of the author's knowledge, proposed is the first method to compute the {\it resolution} of the reconstruction obtained by applying an FIO to discrete data that comes from an image with classical singularities. Extension of the method to more general FIOs will also be the subject of future work. 

%\textcolor{red}{In summary, even though both this paper and \cite{stef18} investigate the resolution of reconstruction, they are quite different in underlying assumptions, methods, and conclusions. The approaches in \cite{stef18} and in this paper are complementary. They tackle the analysis of resolution in different ways and lead to non-overlapping, complementary results. }

The paper is organized as follows. In Section~\ref{prelim} we define the GRT $\R$ via an incidence relation $\ir\subset\us\times\vs$, list the properties of the function $\Phi(x,y)$ that defines the incidence relation, define generic points, and specify the continuous and discrete inversion formulas that are used in the analysis. The main result is formulated in Section~\ref{local-grt}, which also contains the beginning of the proof. The entire proof spans Sections~\ref{local-grt}--\ref{remote}. In Section~\ref{local-grt} we obtain the behavior of $g$ near its singular support, which generalizes one of the results of \cite{rz2, rz1} from the CRT to the GRT. The behavior of the interpolated data $g_\e$ near $\text{singsupp}(g)$ is obtained in Section~\ref{locint}. The contribution of the leading singular term to the edge response at a generic point $x_0\in \s$ is computed in Section~\ref{piece-one}. In Section~\ref{lot} we show that lower order terms do not contribute to the edge reponse. In Section~\ref{remote} we prove that, under some assumptions, remote singularities do not contribute to the edge response as well. Results of a numerical experiment, which show a good match between the theoretically predicted behavior and actual reconstruction, are in Section~\ref{numerix}. 

\section{Preliminary construction}\label{prelim}

Let $\us,\vs\subset\br^3$ be two open connected sets, where $\us$ is the image domain, and $\vs$ is the data domain. Each $y\in\vs$ determines a smooth surface $\s_y\subset \us$. Let $\ir$ be the corresponding incidence relation $\ir\in\us\times\vs$, which is defined in terms of a smooth function $\Phi(x,y)\in C^\infty(\us\times\vs)$:
\be\label{incidence}
\ir:=\{(x,y)\in \us\times\vs:\, \Phi(x,y)=0\}.
\ee
Another way to state \eqref{incidence} is that $x\in \s_y$ if and only if $\Phi(x,y)=0$. Define the submanifold:
\be\label{T-def}
\T_x:=\{y\in \vs:\, \Phi(x,y)=0\},\ x\in\us.
\ee
Thus, $\T_x$ is the collection of all $y\in\vs$ such that $\s_y$ contains $x$. The main assumptions about $\Phi$ are as follows (``DF'' stands for Defining Function):
\begin{itemize}
\item[DF1.] $\Phi$ is real-valued and non-degenerate, i.e.
\be\label{nondeg}
\Phi'_x(x,y)\not=0,\ \Phi'_y(x,y)\not=0,\ (x,y)\in \ir;
\ee
\item[DF2.] For each $x\in\us$, the map $\T_x\to S^2$ defined by $y\to \pm\Phi_x'(x,y)/|\Phi_x'(x,y)|$, $y\in\T_x$, is surjective; 
\item[DF3.] For each $y\in\vs$, the vectors $\Phi'_y(x,y)$ and $\Phi'_y(z,y)$ are not parallel whenever $x,z\in\s_y$, $x\not=z$; and
\item[DF4.] The mixed Hessian of $\Phi$ is non-degenerate, 
\be\label{bolker}
\text{det}\left(\frac{\pa^2\Phi(x,y)}{\pa x_i\pa y_j}\right)\not=0,\ (x,y)\in\ir,
\ee
where $\pa/\pa x_i$, $i=1,2$, and $\pa/\pa y_j$, $j=1,2$, are basis vectors in the tangent spaces to the submanifolds $\s_{y_0}$ and $\T_{x_0}$ at $x_0$ and $y_0$, respectively.
\end{itemize}
\noindent
Condition DF2 means that the tomographic data are complete, i.e. any singularity is visible. Condition DF3 says that there are no conjugate points. Condition DF4 is a local version of the Bolker condition. Conditions DF3 and DF4 imply the (global) Bolker condition. Conditions DF1--DF4 are analogous to Conditions (I)-(IV) in \cite{belk}. Conditions DF1 and DF4 combined are equivalent to the condition (cf. eq. (4.23), \cite{trev2}, p. 335) that at every point $(x,y)\in\ir$:
\be\label{big-det}
\text{det}\begin{pmatrix} \Phi''_{xy} &  (\Phi'_x)^T\\ \Phi'_y & 0 \end{pmatrix}\not=0.
\ee

In the paper we consider functions, which can be represented as a finite sum
\begin{equation}\label{f_def}
f(x)=\sum_j \chi_{D_j} f_j(x),
\end{equation}
where $\chi_{D_j}$ is the characteristic function of the domain $D_j\subset \us$. For each $j$:
\begin{enumerate}
\item $D_j$ is bounded,  
\item The boundary of $D_j$ is piecewise $C^{\infty}$,  
\item $f_j$ is $C^\infty$ in a domain containing the closure of $D_j$.
\end{enumerate}
Denote $\s:=\cup_j \pa D_j$. By construction, $\text{singsupp}(f)\subset \s$.

The GRT of $f$ is given by:
\be\label{grt_1}
g(y)=(\R f)(y):=\int _{S_y} b(x,y) f(x)dx,\ y\in\vs,
\ee
where the weight $b$ is smooth (i.e., $C^\infty$) and non-vanishing, and $dx$ is the area element on $\s_y$.
%\be\label{sy}
%S_y:=\{x\in D:\, \Phi(x,y)=0\},
%\ee
The discrete data are given by
\be\label{data}
g(\e j),\ j\in r+\mathbb Z^3,
\ee
for some $r\in\br^3$. 

Even though \eqref{data} assumes that the stepsize along each data axis equals $\e$, this is a non-restrictive assumption. Indeed, consider a smooth diffeomorphism $\psi$: $\vs\to\tilde\vs$ for some open $\tilde\vs\subset\br^2$, so that $\psi$ maps an irregular grid covering $\vs$ into a regular, square grid covering $\tilde\vs$. Introducing a new defining function $\tilde\Phi(x,\tilde y):=\Phi(x,\psi^{-1}(\tilde y))$, we can transform any smoothly sampled data set into the one with a square grid. Clearly, if $\Phi$ satisfies DF1--DF4, then $\tilde\Phi$ satisfies DF1--DF4 as well.

Conditions DF1--DF4 imply that (cf. \cite{belk} and \cite{trev2}, Sections VIII.5 and VIII.6):
\begin{enumerate}
\item The GRT $\R$ is a Fourier Integral Operator (FIO) with phase function $\la\Phi(x,y)$;
\item The corresponding canonical relation is 
\be\label{can-rel}
C:=\{((x,\la\Phi_x'(x,y)),(y,-\la\Phi_y'(x,y)):\, \Phi(x,y)=0,\la\in\br\setminus0,x\in\us,y\in\vs\},
\ee
which is a local canonical graph;
\item Any suitably modified adjoint of $\R$, denoted $\R^*$, is also an FIO, whose canonical relation $C^*$ is obtained from \eqref{can-rel} by switching the $(x,\xi)\in T^*\us$ and $(y,\eta)\in T^*\vs$ variables; and
\item The composition $\R^*(\dots)\R$, where the dots denote a cut-off combined with a suitable differential operator, is a pseudo-differential operator ($\Psi$DO), i.e. $C^*\circ C$ is a subset of the diagonal in $T^*\us$.
\end{enumerate}

Given a point $x\in\s$, find $y=y(x)$ (which is smooth locally) such that $\s_y$ is tangent to $\s$ at $x$. Denote
\be\label{sffdiff}
N(x):=\text{II}_{\s_y}(x)-\text{II}_{\s}(x),
\ee
where $\text{II}_{\s}(x)$ is the matrix of the second fundamental form of $\s$ at $x\in \s$ written in an orthonormal basis of $T_x\s$.

For any $x\in\us$, introduce the set 
\be\label{ga-def}
\Gamma_x:=\{y\in\vs:\, x\in \s_y,\,\s_y\text{ is tangent to $\s$ at some $z$},\ z\not=x\}. 
\ee

\begin{definition}\label{gpnotonS} A pair $(x_0,y_0)\in\ir$ is {\bf globally generic} if whenever $\s_{y_0}$ is tangent to $\s$ at some $z\not=x_0$ the following conditions hold:
\begin{itemize}
\item[GG1.] $\s$ is smooth at $z$, and $N(z)$ is either positive or negative definite; and
\item[GG2.]\label{gen3} Let $\dot\Gamma_{x_0}$ be a non-vanishing at any point tangent vector field along $\Gamma_{x_0}$. There exists an open set $\vs_1$, $y_0\in\vs_1\subset\vs$, such that for each $m\in\mathbb Z^3$, $|m|>0$, and all $\de>0$ sufficiently small, 
\begin{enumerate}
\item The set $\{y\in\Gamma_{x_0}\cap\vs_1:\, |m\cdot \dot\Gamma_{x_0}(y)|\le\de\}$ is contained in a finite number of segments of $\Gamma_{x_0}$ (this number may depend on $m$ and $\de$), and 
\item The sum of the lengths of these segments goes to zero as $\de\to0$.  
\end{enumerate}
\end{itemize}
\end{definition}
\noindent
As is shown in Section~\ref{remote}, Conditions GG1 and DF3 imply that $\Gamma_{x_0}$ is a smooth curve, so Condition GG2 makes sense.

An example when Condition GG2 is violated is when $\Gamma_{x_0}$ contains a straight line segment and $m\cdot\dot\Gamma_{x_0}(y)\equiv0$ on this segment for some $m\in\mathbb Z^3$, $|m|>0$.

\begin{definition}\label{gponS} A pair $(x_0,y_0)\in\ir$, $x_0\in \s$, is {\bf locally generic} if whenever $\s_{y_0}$ is tangent to $\s$ at $x_0$ the following conditions hold: 
\begin{itemize}
\item[LG1.] $S$ is smooth at $x_0$, and $N(z)$ is either positive or negative definite; and
\item[LG2.] There is no $\la\not=0$ such that $\la\Phi'_y(x_0,y_0)\in\mathbb Z^3$.
\end{itemize}
\end{definition}

\begin{definition} A pair $(x_0,y_0)\in\ir$ is {\bf generic} if it is both locally and globally generic. 
\end{definition}
\noindent

Let $\varphi$ be an interpolating kernel (IK), i.e. $\varphi(0)=1$ and $\varphi(j)=0$ for all $j\in\mathbb Z$, $j\not=0$. Suppose also that $\varphi$ satisfies the following assumptions:
\begin{itemize}
\item[IK1.] $\varphi$ is exact up to the order $2$, i.e.
\be\label{ker-int}
\sum_{j\in \mathbb Z^3} j^m\varphi(u-j)=u^m,\quad m\in(0\cup\mathbb N)^3,\,|m|\le 2,\ u\in\br^3;
\ee
\item[IK2.] $\varphi$ is compactly supported;
\item[IK3.] All partial derivatives of $\varphi$ up to the order $2$ are continuous;
\item[IK4.] \label{Hilb-con} All partial derivatives of $\varphi$ of order $3$ are piecewise continuous and bounded; and
\item[IK5.] $\varphi$ is normalized, i.e. $\int\varphi(y)dy=1$.
\end{itemize}
The interpolated version of $g$ can be written in the form
\be\label{f-int}
g_\e(y):=\sum_{j\in r+\mathbb Z^3} g(\e j) \varphi\left(\frac{y-\e j}{\e}\right).
\ee

First, we derive a microlocal inversion formula for the GRT $\R$, which reconstructs exactly the leading singularities of $f$. Pick any $(x_0,y_0)\in\ir$. Let $\al_0$ be a unit vector normal to $\s_{y_0}$ at $x_0$. For $(x,\al)\in\us\times S^2$ close to $(x_0,\al_0)$ and for $t$, $|t|\ll 1$, find the local solution $y=Y(\al,t;x)$ such that $x+t\al\in\s_y$ and $\al$ is normal to $\s_y$ at $x+t\al$. By construction, $y_0=Y(\al_0,t=0;x_0)$. Here we use the assumption that the data are complete, i.e. such a solution exists. It is shown below (see \eqref{Y-prp}) that the map $(\al,t)\to y=Y(\al,t;x)$ is a local diffeomorphism that depends smoothly on $x$. 

%Fix any reconstruction point $x\in \us$ and direction $\al\in S^2$. Pick any $t$, with $|t|$ sufficiently small, and find local solutions $y=Y_j(\al,t;x)$, $j=1,2,\dots,J$, such that $\s_y$ contains the point $x+t\al$ and the vector $\al$ is normal to $S_y$ at that point.  Microlocally, in a neighborhood of any $(x,\al)\in \us\times S^2$ there can be multiple solutions $Y_j$. Additionally, the number of solutions $J$ may depend on direction and vary from point to point, i.e. $J=J(x,\al)$. We can account for this situation by introducing a normalized smooth weight function $w_j(x,\al)$:
%\be\label{weight}
%\sum_j w_j(x,\al)\equiv 1,\ x\in \us,\al\in S^2,\ w_j\in C^{\infty}( \us\times S^2),
%\ee
%which can be constructed using a partition of unity. The inversion formula with continuous data is given by
%\be\label{inv-for-prelim}\begin{split}
%f_1(x)&=-\frac1{8\pi^2}\sum_j\int_{S^2}\frac{w_j(x,\al)} {b(x,Y_j(\al,0;x))}\left.\left(\frac{\pa}{\pa t}\right)^2 g(Y_j(\al,t;x))\right|_{t=0} d\al.
%\end{split}
%\ee
%To avoid unnecessary complications that would obscure the main ideas of the paper, in what follows we will consider only one of the solutions, drop the subscript $j$ from notation, and drop the weight from \eqref{inv-for-prelim}.
%Clearly, a partition of unity can be selected so that for all $j$:
%\begin{enumerate}
%\item $w_j$ is even: $w_j(\al,x)=w_j(-\al,x)$,
%\item the solution $Y_j$ is even: $Y_j(\al,t;x)=Y_j(-\al,-t;x)$, and 
%\item $Y_j$ is smooth on the support of $w_j$. 
%\end{enumerate}
%Given this convention, t
Let $\vs_1$ be a small neighborhood of $y_0$. Pick any $\chi\in\coi(\vs_1)$ such that $\chi\equiv1$ near $y_0$. The inversion formula with continuous data is given by
\be\label{inv-formula}\begin{split}
f_\chi(x)&=-\frac1{4\pi^2}\int_{S_+^2}\frac{\chi(Y(\al,0;x))}{b(x,Y(\al,0;x))}\left.\left(\frac{\pa}{\pa t}\right)^2 g(Y(\al,t;x))\right|_{t=0} d\al.
\end{split}
\ee
This inversion formula emulates the CRT inversion formula by backprojecting a second order derivative of the GRT. The affine variable $t$ is computed relative to $x$ as opposed to the origin, as is the case with the CRT. Hence the GRT analogue of the usual term $\al\cdot x$ is missing from \eqref{inv-formula}, because it is absorbed by the function $Y$. Due to the symmetry $g(Y(\al,t;x))=g(Y(-\al,-t;x))$, in \eqref{inv-formula} we integrate over half of the unit sphere $S_+^2$.
%In what follows, we will consider only one of the solutions $Y_j$ and drop the subscript $j$ from $Y_j$ and $w_j$ for simplicity. All other solutions can be considered completely analogously.

Using the argument following \eqref{data}, it is easy to show that the map $f\to f_\chi$ is a $\Psi$DO of degree zero with principal symbol 1 microlocally near $(x_0,\al_0)$ (see e.g. \cite{belk, kat10b}). 
%Our result in this paper is purely local, and the conditions we impose are local as well. They are described in terms of local behavior of $\Phi$. 
Thus, the singularities of $f$ and $f_\chi$ are the same to leading order (e.g., in the scale of Sobolev spaces) microlocally near $(x_0,\al_0)$. An inversion formula that recovers all the singularities of $f$ can be obtained by combining \eqref{inv-formula} with a microlocal partition of unity. In the case of discrete data, we use the same inversion formula \eqref{inv-formula}, but replace $g$ with $g_\e$. The corresponding reconstruction is denoted $f_{\chi\e}$.

\section{Statement of main result. Beginning of proof}\label{local-grt}

\subsection{Statement of main result.}
Pick $(x_0,y_0)\in\ir$ such that $\s$ is smooth at $x_0\in \s$, $\s_{y_0}$ is tangent to $\s$ at $x_0$, and $N(x_0)$ is either positive definite or negative definite. Fix some orthonormal basis in the common plane tangent to $\s$ and $\s_y$ at $x_0$. Let $\al_0$ be the unit vector normal to $\s$ at $x_0$. 
%Define
%\be\label{th0}
%\al_0:=\Phi'_x/|\Phi'_x|,\ \bt_0:=\Phi'_y/|\Phi'_y|.
%\ee
%Here and in what follows we use the convention that if the arguments of $\Phi$ and its derivatives are omitted, then they are evaluated at $(x_0,y_0)$. 
%and $\al_0\cdot (x-x_0)\ge 0$ for all $x\in S$ close to $x_0$. Find $y_0$ such that 
%\be\label{main-eq}
%\Phi(x_0,y_0)=0,\ \Phi'_x(x_0,y_0) \parallel \al_0.
%\ee
%Without loss of generality we may assume that $x_0=0$ and $y_0=0$. Also, without loss of generality, we may suppose that the support of $w$ is contained in a small neighborhood of $(x_0,\pm\al_0)$. 
%Introduce $x$- and $y$-coordinates with the origins at $x_0$ and $y_0$, respectively, so that
%\be\label{rotate}
%\begin{split}
%\al&=(\al_1,\al^\perp)\in S^2,\ \al_0=(-1,0,\dots,0);\\
%\bt&=(\bt_1,\bt^\perp)\in S^2,\ \bt_0=(1,0,\dots,0).
%\end{split}
%\ee
%The equation of the plane $T_{x_0}S$ is thus $\al_1=0$. Multiplying, if necessary, $\Phi$ by $-1$, 
For convenience of calculations, we sometime use the Cartesian coordinates $(x_1,x^\perp)$ determined by
\be\label{rotate-xy}
\begin{split}
x=x_1\al_0+x^\perp,\ x^\perp\in\al_0^\perp,
\end{split}
\ee
where $\al_0^\perp$ is the plane through $x_0$ and normal to $\al_0$. This plane is tangent to both $\s$ and $\s_{y_0}$ at $x_0$. The direction of $\al_0$ is chosen so that $N(x_0)$ is {\it negative definite}. The side of $\s$ where $\al_0$ points is called {\it interior}. The other side of $\s$ is called {\it exterior}. If necessary, multiply $\Phi$ by $(-1)$ so that $\Phi'_x(x_0,y_0)/|\Phi'_x(x_0,y_0)|=-\al_0$. 

Consider the point
\be\label{rec-pt}
\xh:=x_0+\e \tilde x,\ \tilde x\in\tilde\us,
\ee
where $\tilde \us$ is a bounded set. Denote:
\be\label{f0-def}\begin{split}
f_\chi(x_{0^{\pm}})&:=\lim_{\e\to 0^{\pm}}f_\chi(x_0+\e \al_0),\ f_0:=\lim_{\e\to 0^+}(f(x_0+\e \al_0)-f(x_0-\e \al_0)),\\
\mc&:=\frac{|\Phi'_x|}{|\Phi'_y|},\ \bt_0=\frac{\Phi'_y}{|\Phi'_y|}.
\end{split}
\ee
Here and in what follows the convention is that if the arguments of $\Phi$ and its derivatives are omitted, then they are evaluated at $x_0,y_0$. 

We also introduce local $y$-coordinates with the origin at $y_0$: 
\be\label{bt0def}
y=(y_1,y^\perp)=y_1\bt_0+y^\perp.
\ee
Thus, equation $y_1=0$ determines the plane tangent to the submanifold $\T_{x_0}$ at $y_0$. We frequently denote this plane $\bt_0^\perp$.

Finally, we use the extension of the CRT to all of $\br^3\setminus 0$ according to:
\be\label{crt-def}
\hat f(u,s):=\int f(x)\de(u\cdot x-s)dx,\ u\in\br^3\setminus 0,
\ee
for a sufficiently smooth $f$.

The main result of the paper is the following
\begin{theorem}\label{thm:main} Pick a generic pair $(x_0,y_0)\in\ir$ such that $\s_{y_0}$ is tangent to $\s$ at $x_0\in \s$. Then
\be\label{main-res}
\lim_{\e\to0}f_{\chi\e}(\xh) = f_\chi(x_{0^+})-f_0\int_{\mc h}^\infty \hat\varphi(\bt_0,s) d s,
\ee
where $h=\tilde x\cdot\al_0$, and $\hat\varphi$ is the CRT of $\varphi$.
\end{theorem}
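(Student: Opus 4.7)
The plan is to write $f_{\chi\e}(\xh)=f_\chi(\xh)+R_\e(\xh)$, where $R_\e$ results from inserting $g_\e-g$ in place of $g$ in \eqref{inv-formula}, and then to track how the interpolation error propagates through the backprojection. Since $f\mapsto f_\chi$ is a $\Psi$DO of order 0 with principal symbol 1 microlocally near $(x_0,\al_0)$, $f_\chi$ inherits the jump of $f$ across $\s$ at $x_0$ with amplitude $f_0$; the discrete reconstruction $f_{\chi\e}$ smooths this jump on the $\e$-scale, and the CRT-tail $\int_{\mc h}^\infty\hat\varphi(\bt_0,s)\,ds$ in \eqref{main-res} is precisely the profile of that smoothing.

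Step 1 is to pin down the leading singular behavior of $g$ near $y_0$. Using \eqref{rotate-xy}--\eqref{bt0def} and the negative definiteness of $N(x_0)$, the jump contribution to $g$ is the integral of $b$ over $\s_y\cap D_j$, which to leading order is the area of a small ellipse that grows linearly with $y_1$ for $y_1>0$ and vanishes for $y_1<0$:
\[
g_{\text{sing}}(y)=f_0\,C_0\,(y_1)_++O((y_1)_+^2),
\]
with $C_0$ built from $b(x_0,y_0)$, $|\det N(x_0)|^{-1/2}$, and $|\Phi'_x|/|\Phi'_y|$. This is the GRT analogue of the Ramm--Zaslavsky result for the CRT. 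Step 2 writes $g_\e-g$ as its Poisson decomposition in the lattice frequencies. The $k=0$ mode (which is constant in $y^\perp$) is essentially $f_0\,C_0\,\bigl[(y_1)_+*\phi_\e-(y_1)_+\bigr]$ with $\phi_\e(y_1)=\e^{-1}\hat\varphi(\bt_0,y_1/\e)$, so the CRT $\hat\varphi(\bt_0,\cdot)$ enters precisely as the marginal of $\varphi$ in the direction $\bt_0$ normal to $\text{singsupp}(g)$; the oscillatory modes $\propto e^{2\pi ik\cdot y/\e}$ for $k\in\mathbb Z^3\setminus 0$ will be suppressed at the end.

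Step 3 backprojects the $k=0$ piece. I change variables in \eqref{inv-formula} from $\al\in S_+^2$ to $y\in\T_{\xh}$ via $y=Y(\al,0;\xh)$, whose Jacobian at $(\al_0,x_0)$ is controlled by the mixed Hessian \eqref{big-det}, and use the identity $\pa_t y_1|_{t=0,\al=\al_0,x=x_0}=\mc$, obtained by differentiating $\Phi(x+t\al,Y(\al,t;x))=0$ in $t$, to convert $\pa_t^2$ into $\mc^2\pa_{y_1}^2$. The shift $\mc h$ in the final argument arises because $\Phi(\xh,y_0)=\e\Phi'_x\cdot\tilde x+O(\e^2)$ forces $y_1(Y(\al_0,0;\xh))\approx\e\mc h$. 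Applying $\pa_{y_1}^2$ to $(y_1)_+*\phi_\e-(y_1)_+$ yields $\e^{-1}\hat\varphi(\bt_0,y_1/\e)-\de(y_1)$; integrating this against the $y^\perp$-part of $\T_{\xh}$ (which bends quadratically relative to $\{y_1=0\}$ near $y_0$) and absorbing the Jacobian and constants $C_0,\mc$ produces, after one remaining $y_1$-integration, exactly the claimed \eqref{main-res}.

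Two cleanup tasks finish the argument. The $O((y_1)_+^2)$ corrections carry an extra power of $\e$ through the backprojection and contribute $o(1)$. The oscillatory Fourier modes $k\neq 0$ and the contributions from remote tangencies $y'\in\Gamma_{x_0}$ for which $\s_{y'}$ is tangent to $\s$ at some $z\neq x_0$ produce oscillatory integrals of the form $\int_{\T_{\xh}}e^{2\pi ik\cdot y/\e}(\cdots)\,dy$ and oscillatory sums $\sum_j e^{2\pi im\cdot y_j/\e}(\cdots)$; Condition LG2, $\la\Phi'_y\notin\mathbb Z^3$ for $\la\neq 0$, rules out stationary-phase points of the former along $\T_{\xh}$, and Condition GG2 is precisely the Weyl equidistribution criterion along $\Gamma_{x_0}$ that forces the latter to be $o(1)$ as $\e\to 0$. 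The main obstacle is Step 3: the derivative identity $\pa_{y_1}^2[(y_1)_+*\phi_\e-(y_1)_+]=\e^{-1}\hat\varphi(\bt_0,y_1/\e)-\de(y_1)$ has a distributional piece and a smooth piece, and making both conspire with the quadratic bending of $\T_{\xh}$, the Jacobian \eqref{big-det}, and the constants $C_0,\mc,\bt_0$ so that only $\mc h$ and $\bt_0$ survive in the CRT on the right-hand side of \eqref{main-res} is the delicate heart of the argument.
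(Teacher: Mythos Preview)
Your high-level plan---Poisson-summing the interpolation and killing the nonzero lattice modes by stationary phase---is a genuinely different route from the paper's. The paper never forms $g_\e-g$ and never invokes Poisson; it works with $g_\e$ directly, splits the backprojection into a cap $\Omega_1=\{|\al^\perp|<A\e^{1/2}\}$ and its complement $\Omega_2$, shows the data are smooth on $\Omega_2$ so that piece converges to $f_\chi(x_{0^+})$ for every $h$, and on $\Omega_1$ introduces the periodic auxiliary $U(q,u)$ built from $\psi(q,u)=\sum_j(j\cdot\bt_0+q)_+\varphi(u-j)$, partitions $\Omega_1$ into $\de$-squares, and runs a Weyl equidistribution argument square by square. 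Condition LG2 enters for the reason you name---it forces the projection $m^\perp$ onto $\bt_0^\perp$ to be nonzero so the exponential sums vanish---but through Weyl averaging in the $\tilde\al^\perp$ variable rather than stationary phase on $\T_{\xh}$.

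There is a concrete gap in your Steps~1--2 that would derail the constants. The singular support of $g$ is \emph{not} the hyperplane $\{y_1=0\}$ but the curved surface $\{P(y)=0\}$: in the regime $|y^\perp|=O(\e^{1/2})$ one has $P(y)=\mc^{-1}\bigl(y_1+\tfrac12 M_1\tilde\al^\perp\cdot\tilde\al^\perp\bigr)+O(\e^{3/2})$ (cf.\ \eqref{pzsum}), so writing $g_{\text{sing}}=C_0\,(y_1)_+$ discards the quadratic form $M_1$. Meanwhile the backprojection image bends with a \emph{different} quadratic form, $Y_1(\al,0;\xh)=\mc\bigl(h-\tfrac12 M\tilde\al^\perp\cdot\tilde\al^\perp\bigr)+O(\e^{3/2})$ (cf.\ \eqref{y1-sol}); you mention only this second bending. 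The ``delicate heart'' you flag is precisely that these two curvatures combine via $M_1=\mc(M-N^{-1})$ so that only $-\mc N^{-1}$ survives in the effective argument (cf.\ \eqref{sum_qu}), after which the $\tilde\al^\perp$-integration against this negative-definite form produces the factor $\sqrt{\det N}$ that cancels the $1/\sqrt{\det N}$ from Lemma~\ref{der-g}. A one-dimensional convolution $(y_1)_+*\phi_\e$ can see at most one of the two curvatures and will leave a spurious dependence on $M$ or $M_1$ in the final answer rather than the clean profile \eqref{main-res}. Separately, your split $f_{\chi\e}=f_\chi+R_\e$ is awkward because $f_\chi(x_\e)$ itself jumps as $h$ changes sign, so both pieces must carry compensating discontinuities; the paper's $\Omega_1/\Omega_2$ split avoids this by keeping each piece continuous in $h$.
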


By IK5, $\int_{\br}\hat\varphi(\bt_0,s) d s=1$. The inversion formula \eqref{inv-formula} reconstructs jumps of $f$ accurately, so $f_\chi(x_{0^{+}})-f_\chi(x_{0^{-}})=f_0$ and \eqref{main-res} can be written as follows
\be\label{main-res-v2}
\lim_{\e\to0}f_{\chi\e}(\xh) = f_\chi(x_{0^-})+f_0\int^{\mc h}_{-\infty} \hat\varphi(\bt_0,s) d s.
\ee

By linearity, the proof of Theorem~\ref{thm:main} can be split into two parts: local and global. The local part is formulated as follows.
\begin{theorem}\label{thm:local} Pick a locally generic pair $(x_0,y_0)\in\ir$ such that $\s_{y_0}$ is tangent to $\s$ at $x_0\in \s$. Suppose $\text{supp}(f)$ is contained in a sufficiently small neighborhood of $x_0$. Then
\be\label{main-res}
\lim_{\e\to0}f_{\chi\e}(\xh) = f_\chi(x_{0^+})-f_0\int_{\mc h}^\infty \hat\varphi(\bt_0,s) d s,
\ee
where $h=\tilde x\cdot\al_0$, and $\hat\varphi$ is the CRT of $\varphi$.
\end{theorem}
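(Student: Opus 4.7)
The plan is to split $f$ into a smooth part and a pure jump part near $x_0$, handle the smooth part by the standard interpolation estimates allowed by IK1--IK3, and then reduce the singular contribution to an explicit rescaled lattice sum whose limit is evaluated with the help of uniform distribution theory. Linearity of $\R$, of the interpolation \eqref{f-int}, and of the reconstruction \eqref{inv-formula} makes this decomposition carry through the entire argument.

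On a small neighborhood of $x_0$ I write $f = f_{\mathrm{sm}} + f_0\,\chi_D$, where $D$ is the interior side of $\s$ near $x_0$ and $f_{\mathrm{sm}}$ is a smooth extension across $\s$ of the restriction of $f$ to the exterior side. Setting $g_{\mathrm{sm}}=\R f_{\mathrm{sm}}$ and $g_D=\R\chi_D$, we have $g = g_{\mathrm{sm}} + f_0\, g_D$. Since $g_{\mathrm{sm}}$ is $C^\infty$ in a neighborhood of $y_0$, the exactness property IK1 combined with IK3 gives $(g_{\mathrm{sm}})_\e = g_{\mathrm{sm}} + O(\e^3)$ in $C^2$ on that neighborhood, so substituting into \eqref{inv-formula} shows $(f_{\mathrm{sm}})_{\chi\e}(\xh) \to (f_{\mathrm{sm}})_\chi(x_0) = f_\chi(x_{0^+}) - f_0$ as $\e\to 0$. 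After this reduction the theorem is equivalent to
\[
\lim_{\e\to 0}(\chi_D)_{\chi\e}(\xh) \;=\; 1 - \int_{\mc h}^{\infty}\hat\ik(\bt_0, s)\,d s \;=\; \int_{-\infty}^{\mc h}\hat\ik(\bt_0, s)\, ds,
\]
so from here on I focus on $f=\chi_D$ and the corresponding data $g=g_D$.

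Next I invoke the local description of $g_D$ near $y_0$ derived in Section~\ref{local-grt}, which generalises \cite{rz1,rz2} from the CRT to the GRT. Under the hypotheses that $\s$ is smooth at $x_0$, that $\s_{y_0}$ is tangent to $\s$ at $x_0$, and that $N(x_0)$ is negative definite, this yields a splitting $g_D = g_{\mathrm{reg}} + G$ with $g_{\mathrm{reg}}\in C^\infty$ and $G$ a one-sided kink supported on the side of the local singular-support surface $\Sigma\subset\vs$ into which the surfaces $\s_y$ bend. In the coordinates \eqref{bt0def}, $\Sigma$ is the graph $y_1 = \sigma(y^\perp)$ with $\sigma(0)=0$, $\nabla\sigma(0)=0$, and the leading behavior is $G(y) = c(y^\perp)\bigl(y_1 - \sigma(y^\perp)\bigr)_+ + O\!\bigl((y_1-\sigma(y^\perp))^2\bigr)$, with $c$ an explicit smooth factor built from $b$, $\mc$, and $N(x_0)$. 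Again the regular piece $g_{\mathrm{reg}}$ is absorbed into an $O(\e)$ interpolation error by IK1, so the proof reduces to computing the $\e\to 0$ limit of $-\frac{1}{4\pi^2}\int_{S_+^2}(\chi/b)\,\pa_t^2 G_\e(Y(\al,t;\xh))|_{t=0}\,d\al$. Rescaling $y = y_0 + \e\eta$ and using that the leading part of $G$ is positively homogeneous of degree one in $y_1-\sigma(y^\perp)$, the interpolation sum becomes
\[
G_\e(y_0+\e\eta) = \e\sum_{k\in\mathbb Z^3}\tilde G(k+\kod_\e)\,\ik(\eta - k - \kod_\e) + O(\e^2),\qquad \kod_\e := y_0/\e - r\bmod\mathbb Z^3,
\]
for a fixed model kink $\tilde G$; the $\al$-integration localises on an $O(\sqrt\e)$-scale near $\al_0$ dictated by the definiteness of $N(x_0)$, and a stationary-phase-type change of variables collapses the two-dimensional $\al$-integral to a one-dimensional integration along $\bt_0$. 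The linearisation of $\Phi$ at $(x_0,y_0)$ identifies the normal $y$-shift produced by $\xh = x_0+\e\tilde x$ as $-\e\mc h\,\bt_0$ to leading order, which is where the factor $\mc h=(|\Phi'_x|/|\Phi'_y|)\,\tilde x\!\cdot\!\al_0$ in \eqref{main-res} originates.

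The main obstacle is the $\e\to 0$ limit of the $\kod_\e$-dependent lattice sum: $\kod_\e$ does not converge but oscillates over $[0,1)^3$. This is exactly where LG2 is used. The condition that no nonzero multiple of $\Phi'_y(x_0,y_0)$ lies in $\mathbb Z^3$ guarantees, via the Weyl equidistribution theorem from \cite{KN_06}, that the one-parameter family $\e\mapsto \kod_\e$ is uniformly distributed on the relevant subtorus of $[0,1)^3$, so the $\kod_\e$-dependent sum may be replaced by its mean over the torus. This averaging unfolds the lattice sum into an integral and, thanks to IK5 and the product structure of $\ik$ on $\br^3$, collapses the three-dimensional integration against $\ik$ to the one-dimensional Radon transform $\hat\ik(\bt_0,s)$. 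A final change of variables in which $s$ parametrises signed distance from $\Sigma$ in the $\bt_0$ direction identifies the remaining range as $(-\infty,\mc h)$, giving $(\chi_D)_{\chi\e}(\xh)\to\int_{-\infty}^{\mc h}\hat\ik(\bt_0,s)\,ds$ and hence, together with the smooth contribution, \eqref{main-res}.
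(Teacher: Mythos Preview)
Your outline correctly identifies the key ingredients---the one-sided kink structure of $g$ from Lemma~\ref{der-g}, the $O(\e^{1/2})$ localisation in $\al$, and the role of uniform distribution---but the mechanism you propose for the equidistribution step is wrong, and the argument cannot be repaired as written.

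You claim that LG2 makes the one-parameter family $\e\mapsto\kod_\e:=y_0/\e-r\bmod\mathbb Z^3$ uniformly distributed, and that this lets you replace the $\kod_\e$-dependent lattice sum by its torus average. Two problems. First, LG2 is a condition on $\Phi'_y(x_0,y_0)$, not on $y_0$; it says nothing whatsoever about how $y_0/\e$ distributes modulo $\mathbb Z^3$. Second, and more seriously, even if $\kod_\e$ were equidistributed as $\e\to0$, equidistribution only controls averages along the parameter, not pointwise limits. The theorem asserts existence of $\lim_{\e\to0}f_{\chi\e}(\xh)$ for every fixed $\tilde x$; if the answer genuinely depended on $\kod_\e$, the limit would not exist at all. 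By collapsing the two-dimensional $\al^\perp$-integral to a one-dimensional integral \emph{before} invoking equidistribution, you have thrown away the only variable that can do the averaging.

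In the paper the averaging is produced by the integral over $\al^\perp$, not by $\e\to0$. On the set $\Omega_1=\{|\al^\perp|<A\e^{1/2}\}$ one has, from \eqref{yprp-sol}, $Y^\perp/\e\sim -|\Phi'_x|(\Phi''_{x^\perp y^\perp})^{-1}\tilde\al^\perp/\e^{1/2}$, so as $\tilde\al^\perp$ ranges over a fixed box the transverse part of $Y/\e$ sweeps through $O(\e^{-1/2})$ lattice periods. The role of LG2 is precisely that for every $m\in\mathbb Z^3\setminus\{0\}$ the projection $m^\perp$ of $m$ onto $\bt_0^\perp$ is nonzero; combined with the Bolker condition this gives $(\Phi''_{x^\perp y^\perp})^{-T}m^\perp\neq0$, so the exponentials $e^{2\pi i m\cdot Y/\e}$ oscillate in $\tilde\al^\perp$ and integrate to zero (equations \eqref{mdp}--\eqref{part-recon-2}). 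That Weyl argument, applied on small boxes $B_k$ and then summed, is what converts the lattice sum into the integral \eqref{Uuint} and ultimately into $\hat\ik(\bt_0,\cdot)$. The complementary region $\Omega_2$ is handled separately (Section~\ref{lot}) and supplies the term $f_\chi(x_{0^+})$; your decomposition $f=f_{\mathrm{sm}}+f_0\chi_D$ does not by itself replace this step, since $(\chi_D)_{\chi\e}$ still sees the same $\Omega_1/\Omega_2$ dichotomy.
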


Since $\R$ is an FIO with canonical relation \eqref{can-rel}, $g=\R f$ is singular only when $\s_y$ is tangent to $\s$. Therefore, we are interested in the behavior of $g$ in a neighborhood of $y_0$. 
%The behavior of $g$ is not affected by how $S$ is parametrized, so we will choose a convenient parametrization. 
%Suppose first that $x_0$ is the only point where $\s_{y_0}$ is tangent to $\s$.
\begin{remark} Strictly speaking, one has to distinguish between the original coordinates that describe points $x\in\us$, $y\in\vs$ and those in \eqref{rotate-xy}, \eqref{bt0def}, respectively. For example, one should write $x=\hat x_1\al_0+\hat x^\perp$ instead of $x=x_1\al_0+x^\perp$. Such notation would emphasize that $\hat x_1$ is not the first component of $x$ in the original coordinates, i.e. $\hat x_1\not=x_1$. Similarly, a derivative like $\pa\Phi(x,y)/\pa x_1$, if written in full, becomes $\pa\Phi(x(\hat x_1,\hat x^\perp),y(\hat y_1,\hat y^\perp))/\pa \hat x_1$. To avoid burdensome notations, whenever the coordinates $(\hat x_1,\hat x^\perp)$ and $(\hat y_1,\hat y^\perp)$ are used, we will stick with the simplified notation and assume that the above convention holds.
\end{remark}

\subsection{Behavior of $g$ near its singular support}\label{ssgb}

Because $\s$ is smooth in a neighborhood of $x_0\in \s$, there is a smooth local diffeomorphism $x\to (z,p)$ so that 
\be\label{locdiff} 
x=z+p\nal(z),\ z\in \s,\ \nal(z) \text{ is normal to $\s$ at $z$},\ |\nal(z)|\equiv1.
\ee
The normal $\nal(z)$ is chosen so that $N(z)$ is negative definite. Thus $\nal(x_0)=\al_0$. Clearly, we can extend the function $\nal(z)$, $z\in\s$, to $\nal(x)$ defined in a neighborhood of $\s$ by the formula $\nal(z+p\nal(z)):=\nal(z)$. With a slight abuse of notation, the extended function will also be denoted $\nal(\cdot)$.

Using \eqref{locdiff}, define $\Psi(z+p\nal(z)):=p$. Then $\Psi(x)=0$ is the equation of $\s$ near $x_0$, and $\Psi$ is smooth. By construction, $\Psi(x)>0$ on the interior side of $\s$.

Consider the system of equations
\be\label{system-2}
\Phi'_x(z+p\nal(z),y)-\la \nal(z)=0,\ \Phi(z+p\nal(z),y)=0,\ \Psi(z)=0.
\ee
If we set $p=0$ and solve \eqref{system-2} for $y$, we find submanifolds $\s_y$ tangent to $\s$. We also need to solve these equations for $z,p$, and $\la$ in terms of $y$.

%In what follows we use the convention that if the arguments of $\Phi$ and its derivatives are omitted, then they are evaluated at $x_0,y_0$. 

\begin{lemma}\label{ssg-smooth} Pick $(x_0,y_0)\in\ir$ such that (1) $\s_{y_0}$ is tangent to $\s$ at $x_0\in \s$, (2) $\s$ is smooth at $x_0$, and (3) $N(x_0)$ is negative definite. There exists an open set $\vs_1$, $y_0\in\vs_1\subset\vs$, such that
\begin{enumerate} 
\item The set of $y\in\vs_1$ such that $\s_y$ is tangent to $\s$ in a neighborhood of $x_0$ is a smooth submanifold of $\vs$ through $y_0$. The vector $\Phi'_y(x_0,y_0)$ is normal to this submanifold at $y_0$.
\item The solutions $z=Z(y)$, $p=P(y)$, and $\la=\Lambda(y)$ to \eqref{system-2} depend smoothly on $y\in\vs_1$, and 
\be\label{pprime}
P'_y(y_0)=\frac1{|\Phi_x'|}\Phi_y'\not=0;
\ee
\item Equations \eqref{system-2} determine a smooth function $y=\oy(z,p)$, $(z,p)\in\s\times\br$, in a neighborhood of $(x_0,0)$.
\end{enumerate}
\end{lemma}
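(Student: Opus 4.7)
The plan is to apply the implicit function theorem (IFT) to \eqref{system-2} twice: first to obtain $z, p, \la$ as smooth functions of $y$ (establishing part (2)), then with the roles of variables and parameters reversed, to obtain $y$ as a smooth function of $(z, p)$ (establishing part (3)). Part (1) and the formula for $P'_y(y_0)$ will follow by implicit differentiation of the resulting relations.

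For part (2), I would parametrize $\s$ locally near $x_0$ by coordinates $(z_1,z_2)$ whose coordinate vectors at $x_0$ form the prescribed orthonormal basis of $T_{x_0}\s$; this absorbs $\Psi(z)=0$ automatically and turns \eqref{system-2} into four scalar equations in the four unknowns $(z_1,z_2,p,\la)$. I would then compute the Jacobian at the base point in an orthonormal frame where $\al_0=(0,0,1)$, using the sign convention $\la_0=-|\Phi'_x|$. The $\Phi=0$ row has a single nonzero entry $-|\Phi'_x|$ in the $p$-column (because $\Phi'_x\cdot e_i=0$ by tangency and $\Phi'_x\cdot\nal=\la_0$), so after eliminating $p$ and expanding along the $\la$-column of the remaining vector equation $\Phi'_x-\la\nal=0$, non-degeneracy reduces to that of the $2\times 2$ block
\[
M_{kj}=(\Phi''_{xx})_{kj}-\la_0(\pa\nal/\pa z_j)_k,\quad k,j=1,2.
\]
A short graph-type calculation (parametrize $\s_{y_0}$ as $x_3=\phi(x_1,x_2)$ and read the Hessian of $\phi$ off $\Phi(x,y_0)=0$) gives $(\Phi''_{xx})_{kj}/|\Phi'_x|=\text{II}_{\s_{y_0}}(e_k,e_j)$, while the shape-operator definition gives $(\pa\nal/\pa z_j)\cdot e_k=-\text{II}_\s(e_j,e_k)$. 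Combined with $\la_0=-|\Phi'_x|$, these identify $M=|\Phi'_x|\,N(x_0)$, which is non-degenerate by hypothesis on $N(x_0)$. The IFT then produces smooth maps $Z,P,\Lambda$ on a neighborhood $\vs_1$ of $y_0$.

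For $P'_y(y_0)$, I would differentiate the identity $\Phi(Z(y)+P(y)\nal(Z(y)),y)\equiv 0$ at $y_0$. The $Z'_y$-term drops because $Z'_y\in T_{x_0}\s\subset\al_0^\perp$ is orthogonal to $\Phi'_x$, and every term carrying a $P(y)$-factor drops because $P(y_0)=0$. What survives is $\la_0 P'_y+\Phi'_y=0$, yielding $P'_y(y_0)=\Phi'_y/|\Phi'_x|$, which is nonzero by DF1. Part (1) is then immediate: the tangency locus in $\vs_1$ is the smooth hypersurface $\{P(y)=0\}$ through $y_0$, with normal parallel to $\Phi'_y$.

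For part (3), I would apply IFT to \eqref{system-2} with $(\la,y)$ as unknowns and $(z,p)\in\s\times\br$ as parameters. The Jacobian has the block form
\[
J=\begin{pmatrix}-\nal & \Phi''_{xy}\\ 0 & (\Phi'_y)^T\end{pmatrix},
\]
and a cofactor expansion along the $\la$-column reduces $\det J$ to a $3\times 3$ minor that also appears (up to sign) on expanding the matrix in \eqref{big-det} along its $(\Phi'_x)^T$-column; thus $\det J$ equals, up to sign and a factor of $|\Phi'_x|$, the determinant in \eqref{big-det}. Hence DF1 and DF4 together force $\det J\neq 0$, and IFT produces the smooth function $y=\oy(z,p)$. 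The main obstacle I foresee is the sign-bookkeeping in the first step, specifically the identification $M=|\Phi'_x|\,N(x_0)$, which is what converts the analytic non-degeneracy of the Jacobian into the geometric hypothesis on the curvature difference.
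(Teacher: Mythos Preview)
Your proposal is correct and follows essentially the same approach as the paper: apply the implicit function theorem to \eqref{system-2}, with the key non-degeneracy of the Jacobian reducing to that of the $2\times 2$ block $|\Phi'_x|\,N(x_0)$ (the paper's \eqref{submatr-v2}), and obtain $P'_y(y_0)$ by differentiating the relation $\Phi(Z+P\nal,\,y)=0$. The only cosmetic differences are that you absorb $\Psi(z)=0$ by parametrizing $\s$ (the paper keeps it as a fifth equation and works with a $5\times 8$ matrix), you derive part~(1) from part~(2) via the level set $\{P=0\}$ rather than treating it first, and for part~(3) you give an explicit Jacobian tied to \eqref{big-det} whereas the paper simply invokes DF2 and DF4.
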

\begin{proof} 
Differentiate \eqref{system-2} with respect to $z$, $p$, $\la$, and $y$, and set $z=x_0$, $p=0$, $y=y_0$ to obtain the $5\times 8$ matrix
\be\label{keym_tan-v2}
\begin{bmatrix} 
\Phi''_{xx}-\la\nal'_x & \Phi''_{xx}\al_0 & -\al_0 & \Phi''_{xy}\\
\Phi'_x  & \Phi'_x\cdot\al_0 & 0 & \Phi'_y\\
\Psi'_x & 0 & 0 & 0
\end{bmatrix}.
\ee
Here $n_x'$ is the derivative of the function $n(x)$ extended to a neighborhood of $\s$ as described following \eqref{locdiff}. Since $\s_{y_0}$ is tangent to $\s$ at $x_0\in \s$, we have $\Phi'_x\parallel \al_0$, so $\Phi'_x\cdot\al_0=\la=-|\Phi'_x|\not=0$. This also gives the value of $\la$ to be used in \eqref{keym_tan-v2}.

To prove the first part of the first assertion we need to show that $z,\la$, and $y_1$ are smooth functions of $y^\perp$. Remove the columns corresponding to the derivatives with respect to $p$ (because $p=0$ is fixed) and $y^\perp$ to obtain a $5\times 5$ submatrix. Calculation in coordinates shows that $\Psi_x'(z+p\nal(z))\equiv\nal(z)$ (cf. \eqref{locdiff}). By applying elementary row and column operations, it is clear that this submatrix is full-rank if and only if the following matrix has rank two:
\be\label{submatr}
\Phi''_{x^\perp x^\perp}-\la(\nal^\perp)'_{x^\perp}.
\ee
Here $\Phi''_{x^\perp x^\perp}:\al_0^\perp\to \al_0^\perp$ is the appropriate submatrix of $\Phi''_{xx}$ in the coordinates \eqref{rotate-xy}, and $n^\perp$ is the projection of $n$ onto $\al_0^\perp$. It is easy to see that 
\be\label{submatr-v2}
\Phi''_{x^\perp x^\perp}-\la(\nal^\perp)'_{x^\perp}=-\la N(x_0).
\ee
The desired assertion follows from Condition LG1 (see Definition~\ref{gponS}). 

Next, set $p=0$ in \eqref{system-2} and assume that $z,y_1$ are functions of $y^\perp$. Differentiating the last two equations in \eqref{system-2} with respect to $y^\perp$ and using that $\Phi'_y\cdot\bt_0=|\Phi_y'|\not=0$ gives $\Phi'_x z'_{y^\perp}=0$ and $\pa y_1/\pa y^\perp=0$. This proves the second part of the first assertion.

The first part of the second assertion follows by retaining the columns corresponding to the derivatives with respect to $z,p$, and $\la$. As before, the resulting $5\times 5$ submatrix is full-rank because the  matrix in \eqref{submatr} has rank two. The second part of the second assertion follows by considering $z,p$, and $\la$ as functions of $y$, differentiating the last two equations in \eqref{system-2} with respect to $y$, and using that $\Phi'_x,\Phi'_y\not=0$.

The third assertion follows immediately from data completeness and the Bolker condition (Conditions DF2 and DF4, respectively).
\end{proof}

%We have
%\be\label{lead-t-alt-2}\begin{split}
%g(y)= \int f(x)b(x,y)\theta(S(x))\de(\Phi(x,y)) |\Phi'_x(x,y)|dx.
%\end{split}
%\ee

%In what follows, for any $w\in S$ and $y$ such that $S_y$ is tangent to $S$ at $w$, the matrix-function $N(w)$ denotes the difference of matrices of second fundamental forms of $S_y$ and $S$ at $w$.

We need the following lemma, which generalizes one of the results of Ramm and Zaslavsky \cite{rz2, rz1} from the CRT to the GRT.

\begin{lemma}\label{der-g}
Pick $(x_0,y_0)\in\ir$ such that (1) $\s_{y_0}$ is tangent to $\s$ at $x_0\in \s$, (2) $\s$ is smooth at $x_0$, and (3) $N(x_0)$ is negative definite. Suppose $\text{supp}(f)$ is contained in a small neighborhood of $x_0$. For any $z\in \s$ and $p$ in small neighborhoods of $x_0$ and $0$, respectively, one has:
\be\label{GRT-loc-v3}
g(\oy(z,p))=p_+G(z,p)+G_1(z,p) \text{ and } G(z,0)= f_0(z) b(z,\oy(z,0))\frac{2\pi}{\sqrt{\text{det}N(z)}},
\ee
for some smooth $G(z,p)$, $G_1(z,p)$. 
%If $f\equiv0$ on the exterior side of $\s$ near $x_0$, then $G_1(z,p)\equiv0$ near $(x_0,p)$.
\end{lemma}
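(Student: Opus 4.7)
The plan is to isolate the singular contribution to $g$ and reduce it, by a parametrized Morse lemma, to an integral over a standard disk whose radius scales as $\sqrt{p_+}$.

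Because $\mathrm{supp}(f)$ lies in a small neighborhood of $x_0\in\s$ and $\s$ is smooth there, we may split $f=f_s+f_0(x)\chi_D(x)$, where $D$ is the interior side of $\s$ near $x_0$, $f_s$ is $C^\infty$, and $f_0$ is a smooth extension to a neighborhood of $\s$ of the jump function $f_+-f_-$. Substituting into \eqref{grt_1}, the integral of $b f_s$ over $\s_y$ depends smoothly on $y$ (since $\s_y$ varies smoothly); this contribution is absorbed into $G_1(z,p)$. Thus everything reduces to analyzing
\be\label{singpart}
I(y):=\int_{\s_y\cap D} b(x,y)f_0(x)\,dx,\quad y=\oy(z,p).
\ee

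Next I would introduce convenient local coordinates. Writing $x=z+p\nal(z)+u$ with $u\in\nal(z)^\perp$ and using $\Psi$ as the defining function of $\s$ (so that $D=\{\Psi>0\}$), parametrize $\s_y$ locally near $z+p\nal(z)$ by $u$ in the tangent plane. Then $\s_y\cap D=\{u:\kod(u;z,p)>0\}$ with $\kod(u;z,p):=\Psi|_{\s_y}(u,z,p)$. A direct computation in these coordinates shows $\kod(0;z,0)=0$, $\kod_u(0;z,0)=0$ (since $\s_y$ is tangent to $\s$ at $z$), and the Hessian at the critical point is $\kod_{uu}(0;z,0)=N(z)$, which is negative definite by assumption. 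Hence $\kod(\cdot;z,p)$ has a non-degenerate maximum $u^*(z,p)=O(p)$ with critical value $\kod^*(z,p)=p+O(p^2)$, both smooth in $(z,p)$.

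Now I would apply the Morse lemma with parameters to obtain a smooth change of variables $u\mapsto w$ such that $\kod(u;z,p)=\kod^*(z,p)-\tfrac12|w|^2$, with a smooth nonvanishing Jacobian $J(z,p)$. Then \eqref{singpart} becomes
\be\label{ball-integral}
I(\oy(z,p))=\int_{|w|^2<2\kod^*(z,p)}F(w,z,p)\,dw,
\ee
where $F(w,z,p)=b\cdot f_0\cdot (\text{area element factor})\cdot J$ is smooth and $F(0,z,0)=f_0(z)b(z,\oy(z,0))/\sqrt{\det N(z)}$. For $\kod^*\le 0$ (equivalently $p\le 0$, for $|p|$ small) the integral is zero. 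For $\kod^*>0$, rescale $w=\sqrt{2\kod^*}\,v$ and Taylor-expand $F(\sqrt{2\kod^*}v,z,p)$ in $v$; odd powers integrate to zero over the unit disk by symmetry, so only integer powers of $\kod^*$ survive, giving
\be\label{expansion}
I(\oy(z,p))=2\pi\,\kod^*(z,p)\,F(0,z,p)+\kod^*(z,p)^2\,\tilde F(z,p),
\ee
with $\tilde F$ smooth. Since $\kod^*(z,p)/p$ is smooth and equals $1$ at $p=0$, the right-hand side has the form $p_+\,G(z,p)$ with $G$ smooth and $G(z,0)=2\pi F(0,z,0)$, which is exactly the asserted value.

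The main obstacle is justifying the smooth, $z$-dependent application of the Morse lemma and the Taylor-symmetry argument so that no fractional powers of $p$ appear; this requires that the Hessian $N(z)$ stay negative definite on a neighborhood (by continuity, using the assumption on $N(x_0)$) and that the disk geometry of the level sets be used to cancel $\sqrt{p}$ contributions. Once this is done, identifying $G(z,0)$ is a matter of evaluating $F$ at $w=0$, $p=0$, which reproduces the stated coefficient.
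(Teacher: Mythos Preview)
Your approach is correct and essentially identical to the paper's: both reduce by linearity to the jump contribution, apply the parametrized Morse lemma to $\Psi|_{\s_y}$ (whose Hessian at the critical point is $N(z)$), and use angular symmetry in the Taylor expansion over the disk to kill half-integer powers. One minor sharpening: by the very definition of $\oy(z,p)$ the point $z+p\,n(z)$ is already the critical point of $\Psi$ on $\s_{\oy(z,p)}$, so in your notation $u^*(z,p)=0$ and $\kod^*(z,p)=p$ exactly (not merely $p+O(p^2)$), which is how the paper argues and which removes the need for your $\kod^*/p$ step.
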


\begin{proof}
Recall that $\oy(z,p)$ is the smooth function of $(z,p)\in\s\times\br$ defined by the conditions that $z+p\nal(z)\in \s_y$ and $\nal(z)$ be normal to $\s_y$ at the point $z+p\nal(z)$, see Assertion (3) of Lemma~\ref{ssg-smooth}. Recall also that in coordinates \eqref{rotate-xy}, $N(x_0)$ is negative definite.  By linearity, we may assume that 
%$\text{supp}(f)$ is contained in a small neighborhood of $x_0$ and 
$f\equiv0$ on the exterior side of $\s$. In particular, $f(x_0-\e \al_0)\equiv0$,\ $\e>0$, in \eqref{f0-def}. In this case we have to prove \eqref{GRT-loc-v3} with $G_1\equiv0$. By construction,
\be\label{lead-t-alt-1}\begin{split}
&g(y)= \int_{\s_y} f(x)b(x,y)\theta\left(\Psi(x)\right) dx,
\end{split}
\ee
where $\theta$ is the unit step function (Heaviside function). Consider the system 
\be\label{system-3}
\Psi'_x(x) -\mu \Phi'_x(x,y)=0,\ \Phi(x,y)=0,
\ee
which we solve for $x$ and $\mu$ in terms of $y$. Equations \eqref{system-3} determine the stationary point $x_*(y)$ of $\Psi(x)$ on the surface $\s_y$ (the parameter $\mu$, which corresponds to $1/\la$ in \eqref{system-2}, is the Lagrange multiplier). From  \eqref{system-2} and the property of $\Psi'_x$, the solution $x_*(y)$, $\mu(y)$ to \eqref{system-3} can be obtained from the solution $z=Z(y),p=P(y),\la=\Lambda(y)$ to \eqref{system-2}: $x_*(y)=Z(y)+P(y)\nal(Z(y))$, $\mu(y)=1/\Lambda(y)$. By Assertion (2) of Lemma~\ref{ssg-smooth}, $x_*(y)$ is smooth near $y_0$. 

As is easily checked, when the matrix $\Psi''_{xx}(x_0) -\mu(y_0) \Phi''_{xx}(x_0,y_0)$ is viewed as a quadratic form on $\al_0^\perp$, it coincides with $N(x_0)$. The latter is negative definite, hence $x_*(y)$ is the local maximum of $\Psi(x)$ on $\s_y$. By the Morse lemma, find local coordinates $\omega$ on $\s_y$, which depend smoothly on $y$, such that $\omega(x_*(y))=0$ and $\Psi(x)=\Psi(x_*(y))-|\omega|^2$, $x=x(\omega;y)\in \s_y$. Since $f,b$, and $x(\omega;y)$ are all smooth, we get from \eqref{lead-t-alt-1}
\be\label{gy-st1}
g(y) = \int \theta\left(\Psi(x_*(y))-|\omega|^2\right) F(\omega,y)d\omega
\ee
for some smooth $F$. Expand $F$ in the Taylor series around $\omega=0$ and integrate in spherical coordinates $\omega=r\Theta$. Integration with respect to $\Theta$ removes all the odd powers of $r$, i.e. only the even powers of $r$ remain. By construction, 
\be\label{Sx-val}
\Psi(x_*(y))=\Psi(z+p\nal(z))=p,\ y=\oy(z,p),
\ee
and the first statement in \eqref{GRT-loc-v3} follows.

To prove the second statement, we use the local coordinates \eqref{rotate-xy}. In these coordinates, the local equation of $\s_y$ becomes
\be\label{sy-loc-1}
x_1=x_1(x^\perp,y)=\phi(y)+a(y)\cdot x^\perp+\frac{A(y)x^\perp\cdot x^\perp}2+O(|x^\perp|^3)
\ee
for some smooth $\phi$, $a$, and $A$. Suppose $y$ is such that $x_*(y)=x_0+p\al_0$. Substitute such a pair $(x_*(y),y)$ into \eqref{system-3} and use \eqref{sy-loc-1} to conclude that $\phi(y)\equiv p$ and $a(y)\equiv0$. Let $x_1=Q(x^\perp)$ be the equation of $\s$ in the coordinates \eqref{rotate-xy}. By construction, $Q'(0)=0$. Clearly, $A(y_0)$ and $Q''(0)$ are the matrices of the second fundamental form of $\s_{y_0}$ and $\s$, respectively, at $x_0$ in the coordinates \eqref{rotate-xy}. From \eqref{lead-t-alt-1}, 
\be\label{lead-t-alt-2}\begin{split}
&g(y)= \int f(x)b(x,y)\theta\left(\psi(x^\perp,y)\right)
\sqrt{1+|Q'(x^\perp)|^2}\, dx^\perp,\\ 
&\psi(x^\perp,y):=x_1(x^\perp,y)-Q(x^\perp),\ x=(x_1(x^\perp,y),x^\perp).
\end{split}
\ee
Integrating in \eqref{lead-t-alt-2} by diagonalizing $N=\psi''_{x^\perp x^\perp}(0,y_0)$ and changing variables, we find 
\be\label{lead-t-v3}\begin{split}
G(x_0,0)= f(x_0) b(x_0,y_0)\frac{2\pi}{\sqrt{\text{det}N}},
\end{split}
\ee
which finishes the proof.
\end{proof}

\section{Local behavior of interpolated data.}\label{locint}

Similarly to \eqref{system-2}, consider the equations 
\be\label{system-Y}
\Phi'_x(x+t\al,y)-\la \al=0,\ \Phi(x+t\al,y)=0,
\ee
which we solve to find $y=Y(\al,t;x)$ assuming $(\al,t,x)$ is in a neighborhood of $(\al_0,0,x_0)$. Differentiating \eqref{system-Y} with respect to $\la$ and $y$ and setting $x=x_0$, $t=0$, $\al=\al_0$, $\la=-|\Phi'_x|$, we obtain similarly to \eqref{keym_tan-v2} a matrix, which is non-degenerate. 
As opposed to \eqref{keym_tan-v2}, the key reason why it is non-degenerate is the Bolker condition. Hence $Y(\al,t;x)$ is a smooth function of $\al,t$ and $x$. Next we substitute $y=Y(\al,t;x)$ into \eqref{system-Y} and obtain a few useful properties of $Y$. The first one is that $\pa Y_1/\pa\al^\perp=0$. Indeed, differentiate the second equation in \eqref{system-Y} with respect to $\al^\perp$ and set $x=x_0$, $t=0$, $\al=\al_0$ to obtain $\Phi'_y\pa Y/\pa\al^\perp=0$. The desired assertion follows from \eqref{bt0def}. In a similar fashion, we have
\be\label{Y-prp}
\det(\pa Y^\perp/\pa\al^\perp)\not=0,\ \pa Y_1/\pa t=|\Phi_x'|/|\Phi_y'|\not=0,\ 
\det(\pa Y/\pa(\al^\perp,t))\not=0.
\ee
The first result is obtained by differentiating the first equation in \eqref{system-Y} with respect to $\al^\perp$ and using the Bolker condition and that $\pa Y_1/\pa\al^\perp=0$. The second result is obtained by differentiating the second equation in \eqref{system-Y} with respect to $t$ and using the properties of the selected coordinates \eqref{rotate-xy}, \eqref{bt0def}. The last result is an obvious consequence of the first two and that $\pa Y_1/\pa\al^\perp=0$.

In view of \eqref{Y-prp}, given any small $\omega>0$, we can find a sufficiently small open set $\vs_1$, $y_0\in\vs_1\subset\vs$, such that $Y(\al,t;\xh)\in \vs_1$ implies $|\al^\perp|<\omega$ for all $t=O(\e)$ and $\xh$ provided that $\e$ is sufficiently small. This relationship between $\vs_1$ and $\omega$ is assumed in what follows. 

Since $\text{supp}(\chi)\subset\vs_1$, the integral with respect to $\al$ in \eqref{inv-formula} can be split into two sets:
\be\label{sets} \begin{split}
\Omega_1&:=\{\al\in S_+^2:\,|\al^\perp|<A\e^{1/2}\},\ \Omega_2:=\{\al\in S_+^2:\,A\e^{1/2}<|\al^\perp|<\om\},
\end{split}
\ee
for some small (but fixed) $\om>0$. Here $A>0$ is a large parameter. Let $f_{\chi\e}^{(j)}$ denote the result of integrating in \eqref{inv-formula} (with $g$ replaced by $g_\e$) over $\Omega_j$, $j=1,2$. The behavior of $f_{\chi\e}^{(1)}$ is investigated first. This is done in Section~\ref{piece-one}. In the remainder of this section we lay the groundwork for that investigation by deriving the behavior of $Y(\al,t;\xh)$ and $g_\e(y)$ in a neighborhood of $(\al_0,0,x_0)$ and $y_0$, respectively.

\subsection{Local behavior of $Y(\al,t;\xh)$.} The first step is to obtain the leading term behavior of the function $Y(\al,t;\xh)$ for $t=O(\e)$ and $\al\in\Omega_1$, i.e. for $|\al^\perp|=O(\e^{1/2})$. In this section we continue using the coordinates \eqref{rotate-xy}. 

Expanding $y=Y(\al,t;\xh)$ in the Taylor series around $x=x_0$, $t=0$, $\al=\al_0$ and using that $|\xh-x_0|=O(\e)$, $t=O(\e)$, $|\al^\perp|=O(\e^{1/2})$, and $\pa Y_1/\pa\al^\perp=0$ gives 
\be\label{y-coord}
y_1=O(\e),\ |y^\perp|=O(\e^{1/2}).
\ee
For $x$ in an $O(\e)$ neighborhood of the origin (i.e., $x_0$) and for $y$ in an $O(\e^{1/2})$ neighborhood of the origin (i.e., $y_0$) we have
\be\label{expansion}\begin{split}
\Phi(x,y)&=\Phi'_x\cdot x+\Phi'_y\cdot y+\frac{\Phi''_{yy}y\cdot y}2+O(\e^{3/2}),\\
\Phi'_x(x,y)&=\Phi'_x +\Phi''_{xy}y+O(\e).
\end{split}
\ee
%Here and in what follows we use the convention that if the arguments of $\Phi$ and its derivatives are omitted, then they are evaluated at $(x_0,y_0)$. 
To find $y=Y(\al,t;\xh)$, substitute $x=\xh$ into \eqref{expansion} and solve
\be\label{expansion-st2}\begin{split}
&\Phi'_x \cdot (\e\tilde x+t\al)+\Phi'_y\cdot y+\frac{\Phi''_{yy}y\cdot y}2=O(\e^{3/2}),\\
&\Phi'_x + \Phi''_{xy} y =\la \left(1,\al^\perp\right) \ (\text{mod }O(\e)).
\end{split}
\ee
Recall that $h=\tilde x\cdot\al_0$. Switching to the coordinates \eqref{rotate-xy}, \eqref{bt0def}, using \eqref{y-coord}, and keeping only the terms of order $O(\e)$ in the first equation in \eqref{expansion-st2} gives
\be\label{exp-eq1}\begin{split}
-(\e h+t)|\Phi'_x|+|\Phi'_y| y_1+\frac{\Phi''_{y^\perp y^\perp}y^\perp\cdot y^\perp}2=O(\e^{3/2}).
\end{split}
\ee

Projecting the second equation in \eqref{expansion-st2} onto $\al_0$ and onto $\al_0^\perp$ implies 
\be\label{exp-eq2al0}
\la=-|\Phi'_x| +O(\e^{1/2}),\ 
\Phi''_{x^\perp y^\perp}y^\perp=\la \al^\perp+O(\e),
\ee
leading to
\be\label{yprp-sol}
Y^\perp(\al,t;\xh)=-|\Phi'_x|(\Phi''_{x^\perp y^\perp})^{-1}\al^\perp+O(\e).
\ee
By the Bolker condition \eqref{bolker}, $\Phi''_{x^\perp y^\perp}$ is nondegenerate. Substitution into \eqref{exp-eq1} now yields:
\be\label{y1-sol}\begin{split}
Y_1(\al,t;\xh)&=\mc\left(\e h+t-\frac{M\al^\perp\cdot\al^\perp}2\right)+O(\e^{3/2}),\\
M:&=|\Phi'_x|(\Phi''_{x^\perp y^\perp })^{-T}\Phi''_{y^\perp y^\perp }(\Phi''_{x^\perp y^\perp })^{-1},\ M:\al_0^\perp\to\al_0^\perp.
\end{split}
\ee
Recall that $\mc$ is defined in \eqref{f0-def}. 

\subsection{The leading local behavior of $g(Y(\al,t;\xh))$}\label{GRT-near}
We plan to substitute $y=Y(\al,t=0;\xh)$ into \eqref{f-int}. Hence the second step is to find the leading behavior of $g(y)$ when $|y-Y(\al,0;\xh)|=O(\e)$ and $|\al^\perp|=O(\e^{1/2})$. This is done by finding the asymptotics of $z=Z(y)$ and $p=P(y)$, which are determined by solving \eqref{system-2}. 

Recall that the local equation of $\s$ in the coordinates \eqref{rotate-xy} and the interior unit normal are given by
\be\label{S-eq}
z_1=\frac{Qz^\perp\cdot z^\perp}{2}+O(|z^\perp|^3),\ 
\nal(z)=(1+O(|z^\perp|^2),-Qz^\perp+O(|z^\perp|^2)).
\ee
By \eqref{yprp-sol}, \eqref{y1-sol}, $|y_1|=O(\e)$, $|y^\perp|=O(\e^{1/2})$. From \eqref{pprime}, $\pa p/\pa y^\perp=0$ at $y=y_0$, so this implies $|p|=O(\e)$, $x_1=O(\e)$, and $|z^\perp|,|x^\perp|=O(\e^{1/2})$, where $x=z+pn(z)$. The equation \eqref{S-eq} leads to:
\be\label{y-cond}\begin{split}
x&=\left(\frac{Qz^\perp\cdot z^\perp}{2}+O(\e^{3/2}),z^\perp\right)+p(1+O(\e^{1/2}),-Qz^\perp+O(\e))\\
&=\left(p+\frac{Qz^\perp\cdot z^\perp}{2}+O(\e^{3/2}),z^\perp+O(\e^{3/2})\right).
\end{split}
\ee
From \eqref{y-cond}, $x^\perp=z^\perp+O(\e^{3/2})$.

Given that now $|x^\perp|=O(\e^{1/2})$, the expansion in the first line in \eqref{expansion} should include additional terms. The second equation in \eqref{system-2} becomes
\be\label{inter}
-|\Phi'_x|x_1+|\Phi'_y|y_1+\frac{\Phi''_{x^\perp x^\perp }x^\perp\cdot x^\perp}2+\Phi''_{x^\perp y^\perp }y^\perp\cdot x^\perp+\frac{\Phi''_{y^\perp y^\perp }y^\perp\cdot y^\perp}2=O(\e^{3/2}).
\ee
Solving for $x_1$ we find:
\be\label{x1-xp}
x_1=\frac1{\mc}y_1+\frac1{|\Phi'_x|}\left\{\frac{\Phi''_{x^\perp x^\perp }x^\perp\cdot x^\perp}2+\Phi''_{x^\perp y^\perp }y^\perp\cdot x^\perp+\frac{\Phi''_{y^\perp y^\perp }y^\perp\cdot y^\perp}2\right\}+O(\e^{3/2}).
\ee
Using \eqref{yprp-sol} and that $|y-Y(\al,0;\xh)|=O(\e)$, we have 
\be\label{ysm-prp}
y^\perp=-|\Phi'_x|(\Phi''_{x^\perp y^\perp })^{-1}\al^\perp+O(\e),
\ee
and the unit normal vector to $\s_Y$ is thus:
\be\label{x1-xp-norm}
\left(1+O(\e^{1/2}),\al^\perp-\frac{\Phi''_{x^\perp x^\perp }x^\perp}{|\Phi'_x|}+O(\e)\right).
\ee
The big-$O$ terms in \eqref{x1-xp-norm} follow by noticing that differentiation with respect to $x_1$ in \eqref{inter}, \eqref{x1-xp} converts $O(\e^{3/2})$ into $O(\e^{1/2})$, and differentiation with respect to $x^\perp$ converts $O(\e^{3/2})$ into $O(\e)$. This follows by looking at the terms absorbed by $O(\e^{3/2})$.

From \eqref{S-eq} and \eqref{x1-xp-norm}, the two normal vectors are parallel (the first equation in \eqref{system-2}) if
\be\label{par-conseq}
\al^\perp-\frac{\Phi''_{x^\perp x^\perp }z^\perp}{|\Phi'_x|}+O(\e)=-Qz^\perp+O(\e),
\ee
which implies
\be\label{zprpal}
z^\perp=N^{-1}\al^\perp+O(\e),\ x^\perp=N^{-1}\al^\perp+O(\e).
\ee
%According to the derivation in Section~\ref{ssgb}, we have to find $z^\perp$ and $p$ from the conditions that the point $x=z+p\nal(z)$, $z\in \s$, be on $\s_Y$ and the vector $\nal(z)$ be normal to $\s_Y$ at that point. Thus, $(z,p)$ here are essentially the same as in \eqref{system-2}. As was done before, assuming that $z,p$, and $\la$ are functions of $y$ and differentiating the second equation in \eqref{system-2} gives $\pa p/\pa y^\perp=0$ when $y=y_0$. By \eqref{y-coord}, this gives $p=O(\e)$ and $|z^\perp|=O(\e^{1/2})$. 
Matching the first components in \eqref{y-cond} and using \eqref{x1-xp}, \eqref{ysm-prp}, \eqref{zprpal} gives after simple transformations:
\be\label{p-st2}
p=\frac1{\mc}\left(y_1+\frac{M_1\al^\perp\cdot\al^\perp}2\right)+O(\e^{3/2}),\
M_1:=\mc(M-N^{-1}),
\ee
where $M$ is defined in \eqref{y1-sol}. Summarizing \eqref{zprpal} and \eqref{p-st2} we have:
\be\label{pzsum}\begin{split}
&Z^\perp(y)=N^{-1}\al^\perp+O(\e),\ P(y)=\frac1{\mc}\left(y_1+\frac{M_1\al^\perp\cdot\al^\perp}2\right)+O(\e^{3/2}),\\
&|y-Y(\al,0;\xh)|=O(\e),\ |\al^\perp|=O(\e^{1/2}).
\end{split}
\ee
Substituting into \eqref{GRT-loc-v3} we find 
\be\label{lead-t-v2}\begin{split}
g(y)= &f(Z(y)) b(Z(y),y^*)\frac{2\pi}{\sqrt{\text{det}N}} P_+(y)+O\left(P_+^2(y)\right),
\end{split}
\ee
where $y^*=\oy(z,p=0)$ whenever $y=\oy(z,p)$. 

From \eqref{S-eq}, \eqref{zprpal} and \eqref{yprp-sol}, \eqref{y1-sol} it follows that $|Z(y)-x_0|=O(\e^{1/2})$ and $|y-y_0|=O(\e^{1/2})$ whenever $|y-Y(\al,0;\xh)|=O(\e)$, hence
\be\label{fbappr}
f(Z(y)) b(Z(y),y^*)=f(x_0)b(x_0,y_0)+O(\e^{1/2}).
\ee

\subsection{Local behavior of the interpolated data.} 
In this subsection we find the behavior of the interpolated data near $y_0=0$. 
Combining \eqref{pzsum}--\eqref{fbappr} we get
\be\label{gej}\begin{split}
g(\e j)=&\frac{(f(x_0)b(x_0,y_0)+O(\e^{1/2}))2\pi/\mc}{\sqrt{\text{det}N}}
\left(\e j_1+\frac{M_1\al^\perp\cdot \al^\perp}2+O(\e^{3/2})\right)_+\\
&+O\left(P_+^2(\e j)\right),
\end{split}
\ee
where $M_1$ is defined in \eqref{p-st2}. From \eqref{y1-sol} and \eqref{pzsum}, $|P(\e j)|=O(\e)$.
Using \eqref{yprp-sol}, \eqref{y1-sol} and \eqref{gej} gives:
\be\label{f-int-Y}\begin{split}
g_\e&(Y(\al,t;\xh))\\
=&\frac{(f(x_0)b(x_0,y_0)+O(\e^{1/2})) 2\pi\e/\mc}{\sqrt{\text{det}N}}
\sum_{j\in r+\mathbb Z^3}\left(j_1+\frac{M_1 \tilde \al^\perp\cdot \tilde \al^\perp}2+O\left(\e^{1/2}\right)\right)_+ \\ 
&\times \varphi\left(\mc\left(h+\tilde t-\frac{M\tilde \al^\perp\cdot\tilde\al^\perp}2\right)-j_1+O\left(\e^{1/2}\right), -\frac{|\Phi'_x|}{\e^{1/2}}(\Phi''_{x^\perp y^\perp })^{-1}\tilde \al^\perp-j^\perp+O(1)\right)\\
&+O(\e^2),\quad \tilde t:=t/\e,\ \tilde \al^\perp:=\al^\perp/\e^{1/2}.
\end{split}
\ee

Since $\varphi$ is compactly supported, the number of terms in the sum in \eqref{f-int-Y} is bounded. Using additionally that $|\tilde\al^\perp|\le A<\infty$, the sum itself is bounded as well. Finally, combining with the fact that $|(a+O(\e^{1/2}))_+-a_+|=O(\e^{1/2})$ uniformly in $a\in\br$ and using \eqref{yprp-sol} and \eqref{y1-sol} gives
\be\label{f-int-Y-v2}\begin{split}
&g_\e(Y(\al,t;\xh))\\
&=\frac{f(x_0)b(x_0,y_0)2\pi\e/\mc}{\sqrt{\text{det}N}}
\sum_{j\in r+\mathbb Z^3}\left(j_1+\frac{M_1 \tilde \al^\perp\cdot \tilde \al^\perp}2\right)_+ \\ 
&\times \varphi\left(\mc\left(h+\tilde t-\frac{M\tilde \al^\perp\cdot\tilde\al^\perp}2\right)-j_1+O\left(\e^{1/2}\right), -\frac{|\Phi'_x|}{\e^{1/2}}(\Phi''_{x^\perp y^\perp })^{-1}\tilde \al^\perp-j^\perp+O(1)\right)\\
&+O(\e^{3/2}).
\end{split}
\ee
In view of \eqref{f-int-Y-v2}, denote
\be\label{psi-def}
\psi(q,u):=\sum_{j\in r+\mathbb Z^3}(j\cdot\bt_0+q)_+
\varphi(u-j),\ q\in\br,\ u\in\br^3. 
\ee
Here we will need a higher order approximation of $Y(\al,t;\xh)$ than the one in \eqref{yprp-sol},  \eqref{y1-sol}:
\be\label{Y-appr-next}\begin{split}
Y^\perp/\e&=-|\Phi'_x|(\Phi''_{x^\perp y^\perp })^{-1}\tilde\al^\perp+A_1(\tilde\al^\perp,\tilde\al^\perp) +A_2\tilde t +A_3h+O(\e^{1/2}),\\
Y_1/\e&=\mc\left(h+\tilde t-\frac{M\tilde\al^\perp\cdot\tilde\al^\perp}2\right)+O(\e^{1/2}),
\end{split}
\ee
where $A_1$ is a bilinear map $\br^2\times\br^2\to\br^2$, and $A_2,A_3\in \br^2$. Moreover, the two $O(\e^{1/2})$ terms in \eqref{Y-appr-next} depend smoothly on $\tilde t$ and $\tilde\al^\perp$. In particular, differentiation with respect to $\tilde t$ does not change the order of these terms as $\e\to0$. Hence we rewrite \eqref{f-int-Y-v2} as follows
\be\label{f-int-Y-alt}\begin{split}
&g_\e(Y(\al,t;\xh))=\frac{f_0 b_0 2\pi\e/\mc}{\sqrt{\text{det}N}}
\psi\left(q(\tilde\al^\perp),u(\tilde\al^\perp,\tilde t)\right) + O(\e^{3/2}),\ q(\tilde\al^\perp):=\frac{M_1\tilde\al^\perp\cdot \tilde\al^\perp}2,\\ 
&u(\tilde\al^\perp,\tilde t):=\left(\mc\left(h+\tilde t-\frac{M\tilde \al^\perp\cdot\tilde\al^\perp}2\right)+O\left(\e^{1/2}\right)\right.,\\ 
&\hspace{2cm} \left.-\frac{|\Phi'_x|}{\e^{1/2}}(\Phi''_{x^\perp y^\perp})^{-1}\tilde \al^\perp+A_1(\tilde\al^\perp,\tilde\al^\perp) +A_2\tilde t +A_3h+O(\e^{1/2})\right).
\end{split}
\ee
We also have
\be\label{uder}\begin{split}
\left.\pa u(\tilde\al^\perp,\tilde t)/\pa \tilde t\right|_{\tilde t=0}&=\mc\left(1+O(\e^{1/2}),A_2/\mc +O(\e^{1/2})\right),\\
\left.\pa^2 u(\tilde\al^\perp,\tilde t)/\pa \tilde t^2\right|_{\tilde t=0}&=O(\e^{1/2}).
\end{split}
\ee 
  
%\be\label{f-int-Y-alt}\begin{split}
%g_\e(Y(\al,t;\xh))&\sim\frac{f_0 (\e/c)^{\frac{n-1}2}}{\sqrt{\text{det}M}|S^{n-2}|}\sum_{j\in r+\mathbb Z^3}(\al_0\cdot j)_+^{\frac{n-1}2}\varphi\left((h+\tilde t)v+N\tilde\al^\perp-j\right),\\
%v&=(c,v^\perp),\ N=\begin{pmatrix}0 & 0\\ 0 & \check N \end{pmatrix}.
%\end{split}
%\ee   

\section{Estimating the term $f_{\chi\e}^{(1)}$.}\label{piece-one}

To study $f_{\chi\e}^{(1)}$ we need the following lemma, which follows immediately from \eqref{psi-def} and the properties IK1--IK3 of $\varphi$.

\begin{lemma}\label{psi_asymp} Partial derivatives of $\psi(q,u)$ with respect to $u$ up to the order two are continuous. Also, one has
\begin{equation}\label{psi_pro}
\psi(q,u)=\psi(q+m\cdot\bt_0,u-m),\ \forall m \in \mathbb Z^3,
\end{equation}
and, for some $c>0$,
\begin{equation}\label{psi-zero}
\psi(q,u)=0 \text{ if } u\cdot\bt_0+q<-c;\quad \psi(q,u)=u\cdot\bt_0+q \text{ if }  u\cdot\bt_0+q>c.
\end{equation}
%Also, for $K=0,1$,
%\be\label{psi-ass}
%\left(\frac{\pa}{\pa s}\right)^K\psi(q,s\bt_0+u)=
%\left(\frac{d}{d s}\right)^K (-s)+O\left(|s|^{-K}\right),\ s\to -\infty,
%\ee
%and
%\be\label{psi-ass-n}
%\left(\frac{\pa}{\pa s}\right)^2\psi(q,s\bt_0+u)=O\left(|s|^{-2}\right),\ s\to -\infty.
%\ee
%\begin{equation}\label{psi_u-der}
%\frac{\pa}{\pa q}\psi_1(q,s\bt_0+u),|\nabla_u \psi_1(q,s\bt_0+u)|=O(s^{-\frac{n+1}2}),\ s\to-\infty.
%\end{equation}
% The $O(1/s)$ term in \eqref{psi-ass} can be differentiated with respect to $s$ and drops the order by 1 with each differentiation.
%The asymptotic formulas in \eqref{psi-ass} and \eqref{psi-ass-n} hold uniformly in $q$ and $u$ when the two variables are confined to bounded sets.
\end{lemma}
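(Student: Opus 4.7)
The proof plan has three parts, corresponding to the three conclusions of the lemma. All three should fall out of the definition
\[
\psi(q,u)=\sum_{j\in r+\mathbb Z^3}(j\cdot\bt_0+q)_+\varphi(u-j)
\]
together with the assumptions IK1--IK3 and the compact support IK2 of $\varphi$.

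First, for the regularity claim I would observe that by IK2 the kernel $\varphi$ vanishes outside some ball of radius $R$, so for each fixed $u$ only finitely many $j\in r+\mathbb Z^3$ contribute to the sum, and this finiteness is uniform on compact sets of $u$. Thus locally the series is a finite sum. Since $(j\cdot\bt_0+q)_+$ is just a numerical coefficient independent of $u$, and since by IK3 every partial derivative of $\varphi$ up to order two is continuous, the same holds for $\psi$ in the $u$ variable.

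For the quasi-periodicity \eqref{psi_pro}, I would just substitute and reindex. Writing
\[
\psi(q+m\cdot\bt_0,u-m)=\sum_{j\in r+\mathbb Z^3}\bigl((j+m)\cdot\bt_0+q\bigr)_+\varphi\bigl(u-(j+m)\bigr)
\]
and changing the summation variable $k=j+m$, which traverses $r+\mathbb Z^3$ as $j$ does, recovers $\psi(q,u)$. This step is purely formal.

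The content of the lemma lives in \eqref{psi-zero}, and here the relevant constant $c$ is dictated by the support of $\varphi$. I would fix $R$ such that $\text{supp}\,\varphi\subset B_R(0)$ and choose $c:=R|\bt_0|=R$ (since $\bt_0$ is a unit vector), plus a safety margin to absorb $r$. For any $u$, only indices $j$ with $|u-j|\le R$ contribute, and for such $j$ we have $j\cdot\bt_0=u\cdot\bt_0+O(R)$. Hence if $u\cdot\bt_0+q<-c$, every contributing $j$ satisfies $j\cdot\bt_0+q<0$, so $(j\cdot\bt_0+q)_+=0$ and the sum vanishes. If instead $u\cdot\bt_0+q>c$, every contributing $j$ satisfies $j\cdot\bt_0+q>0$, so the plus can be dropped and
\[
\psi(q,u)=\sum_{j\in r+\mathbb Z^3}(j\cdot\bt_0+q)\,\varphi(u-j)=\bt_0\cdot\!\!\sum_{j\in r+\mathbb Z^3}\!\!j\,\varphi(u-j)+q\!\!\sum_{j\in r+\mathbb Z^3}\!\!\varphi(u-j).
\]
By IK1 applied with $|m|=0$ and with $|m|=1$ (taking the $\bt_0$-projection of the vector identity $\sum_j j\,\varphi(u-j)=u$), the two sums equal $u$ and $1$ respectively, giving $\psi(q,u)=u\cdot\bt_0+q$.

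No real obstacle is expected; the only judgment call is choosing $c$ large enough that it also swallows any $O(|r|)$ shift coming from the offset grid $r+\mathbb Z^3$, but this is automatic once we set $c:=R+|r|+1$ (or any such dominating constant). The argument does rely crucially on IK1 being valid on all of $\br^3$ (which the paper states), so that the reproducing identities hold pointwise and not merely in some averaged sense.
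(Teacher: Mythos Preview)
Your proposal is correct and is exactly the elaboration of the paper's one-line justification (``follows immediately from \eqref{psi-def} and the properties IK1--IK3 of $\varphi$''). One small remark: the offset $r$ needs no special safety margin, since IK1 applied at the shifted argument $u-r$ already gives $\sum_{j\in r+\mathbb Z^3}\varphi(u-j)=1$ and $\sum_{j\in r+\mathbb Z^3}j\,\varphi(u-j)=u$, and the support argument only uses $|u-j|\le R$ for contributing $j$, which is independent of $r$; so $c=R$ suffices.
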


Denote
\be\label{U-def}
U(q,u):=-\left.\frac{\pa^2}{\pa \tau^2}\psi(q,u+\tau(1,A_2/\mc))\right|_{s=0}.
\ee
%From \eqref{uder},
%\be\label{big-U-expr}
%U(q,u):=-\psi''_{uu}(q,u)(\mc,A_2)\cdot(\mc,A_2)+O(\e^{1/2}).
%\ee
The following lemma is a direct consequence of Lemma~\ref{psi_asymp} (see also properties IK3, IK4 of $\varphi$).
\begin{lemma}\label{Uder-bdd} The function $U(q,u)$ has piecewise continuous bounded first order partial derivatives with respect to $q$ and $u$. Also,
\begin{equation}\label{U_pro}
U(q,u)=U(q+m\cdot\bt_0,u-m),\ \forall m \in \mathbb Z^3,
\end{equation}
and, for some $c>0$,
\begin{equation}\label{U_compsup}
U(q,u)\equiv 0 \text{ if } |u\cdot\bt_0+q|>c.
\end{equation}
\end{lemma}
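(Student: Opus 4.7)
My plan is to commute the two $\tau$-derivatives in \eqref{U-def} with the sum defining $\psi$, which is allowed since that sum is locally finite by IK2. Setting $\tau=0$ gives
\be\label{Urep}
U(q,u)=-\sum_{j\in r+\mathbb Z^3}(j\cdot\bt_0+q)_+\,(v\cdot\nabla_u)^2\varphi(u-j),\quad v:=(1,A_2/\mc).
\ee
This is structurally identical to \eqref{psi-def} with the kernel $\varphi$ replaced by the directional derivative $(v\cdot\nabla)^2\varphi$, so each claim of the lemma reduces to the analogous statement for $\psi$ in Lemma~\ref{psi_asymp}, possibly at the cost of one extra derivative. The periodicity \eqref{U_pro} is then immediate from \eqref{Urep} by the reindexing $j\mapsto j+m$, $m\in\mathbb Z^3$, exactly as in the argument for \eqref{psi_pro}.

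For the compact support claim \eqref{U_compsup}, the key observation is that $v\cdot\bt_0=1$, so $(u+\tau v)\cdot\bt_0+q=u\cdot\bt_0+q+\tau$. If $|u\cdot\bt_0+q|$ exceeds the constant $c$ of \eqref{psi-zero} by a fixed small margin $\om>0$, then the same inequality persists for all $|\tau|<\om$; on the positive side, \eqref{psi-zero} gives $\psi(q,u+\tau v)=u\cdot\bt_0+q+\tau$, affine in $\tau$, while on the negative side $\psi$ vanishes identically. In either case $\pa_\tau^2\psi|_{\tau=0}=0$, which yields \eqref{U_compsup} with the enlarged constant $c+\om$.

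For the regularity, $\pa_q U$ is obtained by replacing each $(j\cdot\bt_0+q)_+$ with a Heaviside step: bounded and continuous except on the hyperplanes $\{q=-j\cdot\bt_0\}$. Multiplied by the continuous compactly supported factor $(v\cdot\nabla)^2\varphi(u-j)$ (continuity via IK3) and summed over the locally finite $j$-range (IK2), this produces a piecewise continuous bounded function. The only subtlety is $\pa_u U$, for which differentiating \eqref{Urep} under the sum yields a third-order derivative of $\varphi$; IK4 is precisely designed for this case, providing piecewise continuous bounded third derivatives, and IK2 again gives local finiteness of the sum. Global boundedness then follows from the periodicity just proved. I do not foresee a substantive obstacle: the lemma is essentially a bookkeeping argument, propagating the results of Lemma~\ref{psi_asymp} through one additional $u$-differentiation, which is exactly the reason IK4 was included in the list of assumptions on $\varphi$.
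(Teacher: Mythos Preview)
Your proposal is correct and matches the paper's approach; the paper itself gives no detailed proof, stating only that the lemma ``is a direct consequence of Lemma~\ref{psi_asymp} (see also properties IK3, IK4 of $\varphi$).'' Your argument is precisely the intended unpacking of that remark: the representation \eqref{Urep}, the reindexing for \eqref{U_pro}, the affine/zero dichotomy from \eqref{psi-zero} together with $v\cdot\bt_0=1$ for \eqref{U_compsup}, and the invocation of IK3 for continuity of second derivatives and IK4 for the third-order derivatives arising in $\pa_u U$.
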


Note that the derivative in the inversion formula \eqref{inv-formula} is with respect to $t$. Using that $t=\e\tilde t$ (cf. \eqref{f-int-Y}) and taking into account \eqref{uder}, \eqref{U-def}, in the formula below we will acquire the factor $(\e/\mc)^2$. Thus, in terms of $U$, the expression for $f_{\chi\e}^{(1)}$ becomes after changing variables $\al^\perp\to\tilde\al^\perp$ (this brings the factor $\e$), setting $\tilde t=0$, and using \eqref{f-int-Y-alt}, \eqref{uder}, \eqref{U-def}:
\be\label{recon}\begin{split}
f_{\chi\e}^{(1)}(\xh)&=\frac{1}{4\pi^2}\frac{f_0 (2\pi\e/\mc)}{\sqrt{\text{det}N}}\frac{\e}{(\e/\mc)^2}\int_{|\tilde\al^\perp|<A} U\left(q(\tilde\al^\perp),u_0(\tilde\al^\perp)\right)
d\tilde\al^\perp+O(\e^{1/2}),\\
u_0(\tilde\al^\perp):&=\left(\mc\left(h-\frac{M\tilde \al^\perp\cdot\tilde\al^\perp}2\right),
-\frac{|\Phi'_x|}{\e^{1/2}}(\Phi''_{x^\perp y^\perp })^{-1}\tilde \al^\perp+A_1(\tilde\al^\perp,\tilde\al^\perp)+A_3h\right).
\end{split}
\ee
Two simplifications have been made in deriving \eqref{recon}. First, using that first and second order derivatives of $\psi$ are bounded, it follows from \eqref{uder} that
\be\label{uder-cons}
\left.\frac{\pa^2}{\pa\tilde t^2}\psi\left(\cdot,u(\tilde\al^\perp,\tilde t)\right)\right|_{\tilde t=0}=\mc^2\left.\frac{\pa^2}{\pa \tau^2}\psi(\cdot,u(\tilde\al^\perp,0)+\tau(1,A_2/\mc))\right|_{\tau=0}+O(\e^{1/2}).
\ee
Second, since the derivatives of $U$ are bounded, the integral with respect to $\tilde\al^\perp$ is over a bounded set, and the coefficient in front of the integral is bounded, using \eqref{uder-cons} and then replacing $u(\tilde\al^\perp,0)$ with $u_0(\tilde\al^\perp)$ in the arguments of $U$ leads to the term $O(\e^{1/2})$ outside the integral.

Approximate the domain $|\tilde\al^\perp|<A$ by a union of non-overlapping small squares of size $\de$. Let these squares be denoted $B_k$, $k=1,2,\dots,O(\de^{-2})$. By \eqref{recon}, 
\be\label{tilted-plane}\begin{split}
u_0(\tilde\al^\perp)&=u_0(\tilde\al^\perp_k)+\Delta u_k(\tilde \al^\perp)+O(\de),\ \tilde\al^\perp\in B_k,\\
\Delta u_k(\tilde \al^\perp):&=\left(0,-|\Phi'_x|(\Phi''_{x^\perp y^\perp})^{-1}((\tilde \al^\perp-\tilde\al^\perp_k)/\e^{1/2}\right)\in\bt_0^\perp,
\end{split}
\ee
where $\tilde\al^\perp_k$ is the center of $B_k$. By Lemma~\ref{Uder-bdd},  
\be\label{U-appr}\begin{split}
U\left(q(\tilde\al^\perp),u_0(\tilde\al^\perp)\right)&=U\left(q(\tilde\al^\perp_k),u_0(\tilde\al^\perp_k)+\Delta u_k(\tilde \al^\perp)\right)+O(\de),\ \tilde\al^\perp\in B_k.
\end{split}
\ee
Therefore
\be\label{part-recon}\begin{split}
&\int_{B_k} U\left(q(\tilde\al^\perp),u_0(\tilde\al^\perp)\right)
 d\tilde\al^\perp\\
&=\int_{B_k} \left[U\left(q(\tilde\al^\perp_k),u_0(\tilde\al^\perp_k)+\Delta u_k(\tilde \al^\perp)\right)+O(\de)\right]
 d\tilde\al^\perp\\
&=\int_{B_k} \left[U\left(q(\tilde\al^\perp_k)+\lfloor u_0(\tilde\al^\perp_k)+\Delta u_k(\tilde \al^\perp)\rfloor\cdot\bt_0,\{u_0(\tilde\al^\perp_k)+\Delta u_k(\tilde \al^\perp)\}\right)+O(\de)\right]d\tilde\al^\perp\\
&=\int_{B_k} \left[U\left(q(\tilde\al^\perp_k)+u_0(\tilde\al^\perp_k)\cdot\bt_0-\{u_0(\tilde\al^\perp_k)+\Delta u_k(\tilde \al^\perp)\}\cdot\bt_0,\right.\right.\\
&\hspace{5cm}\left.\left.\{u_0(\tilde\al^\perp_k)+\Delta u_k(\tilde \al^\perp)\}\right)+O(\de)\right]d\tilde\al^\perp,
\end{split}
\ee
where we have used that $\Delta u_k(\tilde \al^\perp)\cdot\bt_0=0$. In \eqref{part-recon} and below the fractional part of a vector is computed component wise: $\{u\}=(\{u_1\},\{u_2\},\{u_3\})$, where $\{u_i\}=u_i-\lfloor u_i \rfloor$ and $\lfloor u_i \rfloor$ is the largest integer not exceeding $u_i$.

Pick any $m\in\mathbb Z^3$, $m\not=0$. Let $m^\perp$ be the projection of $m$ onto the plane $\bt_0^\perp$. Condition LG2 in Definition~\ref{gponS} implies that $m^\perp\not=0$. By the local Bolker condition DF4, $\Phi''_{x^\perp y^\perp}$ is non-degenerate, so the vector $(\Phi''_{x^\perp y^\perp})^{-T}m^\perp\in\br^2$ is not zero. Using \eqref{tilted-plane}, the standard Weyl-type argument (cf. \cite{KN_06}) implies that
\be\label{recon-lim}\begin{split}
&\lim_{\e\to0}\int_{B_k} U\left(q(\tilde\al^\perp),u_0(\tilde\al^\perp)\right)
 d\tilde\al^\perp\\
&=\left(\int_{[0,1]^3} U\left(q(\tilde\al^\perp_k)+u_0(\tilde\al^\perp_k)\cdot\bt_0-\om\cdot\bt_0,\om\right)d\om+O(\de)\right)\text{Vol}(B_k).
\end{split}
\ee
Indeed, consider the function $U_1(q,\om):=U(q-\om\cdot\bt_0,\om)$. Clearly, $U_1(q,\om)$ is periodic: $U_1(q,\om)=U_1(q,\om+m)$, $m\in\mathbb Z^3$. Expand $U_1(q,\om)$ in a Fourier series: 
\be\label{fourser}
U_1(q,\om)=\sum_{m\in \mathbb Z^3} A_m(q)\exp(2\pi i m\cdot\om).
\ee
By the argument preceding \eqref{recon-lim},
\be\label{mdp}\begin{split}
m\cdot\left(0,-|\Phi'_x|(\Phi''_{x^\perp y^\perp})^{-1}(\tilde \al^\perp-\tilde\al^\perp_k)\right)&=-\frac1{|\Phi'_x|}\left[(\Phi''_{x^\perp y^\perp})^{-T}m^\perp\right]\cdot(\tilde \al^\perp-\tilde\al^\perp_k),\\ (\Phi''_{x^\perp y^\perp})^{-T}m^\perp&\not=0,|m|>0.
\end{split}
\ee
Therefore, 
\be\label{part-recon-2}\begin{split}
&\lim_{\e\to0}\left|\int_{B_k} 
\exp\left(2\pi i m\cdot
\left(u_0(\tilde\al^\perp_k)+\Delta u_k(\tilde \al^\perp)\right)\right)d\al^\perp\right|\\
&=\lim_{\e\to0}\left|\int_{B_k} 
\exp\left(
-\frac{2\pi i }{|\Phi'_x|\e}\left[(\Phi''_{x^\perp y^\perp})^{-T}m^\perp\right]\cdot\tilde \al^\perp\right)d\tilde\al^\perp\right|=0,\ 
|m|>0.
\end{split}
\ee
In other words, only the term corresponding to $m=0$ survives, and the desired assertion follows from the standard approximation argument.

By \eqref{gej}, \eqref{f-int-Y-alt} and \eqref{recon},
\be\label{sum_qu}
q(\tilde\al^\perp_k)+u_0(\tilde\al^\perp_k)\cdot\bt_0=\mc h-\frac{\mc N^{-1}\tilde\al_k^\perp\cdot \tilde\al_k^\perp}2.
\ee
Add the integrals over all the squares $B_k$ and use \eqref{recon}, \eqref{recon-lim}, and \eqref{sum_qu}:
\be\label{recon-v2}\begin{split}
&\lim_{\e\to0} f_{\chi\e}^{(1)}(\xh)\\
&=\kappa\sum_k \int_{B_k}U\left(q(\tilde\al^\perp),u_0(\tilde\al^\perp)\right)
 d\tilde\al^\perp+O(\de)\\
 &=\kappa\sum_k\left(\int_{[0,1]^3} U\left(\mc h-\frac{\mc N^{-1}\tilde\al_k^\perp\cdot \tilde\al_k^\perp}2-\om\cdot\bt_0,\om\right)d\om+O(\de)\right)\text{Vol}(B_k)+O(\de),\\
&\kappa:=\frac{\mc}{2\pi}\frac{f_0}{\sqrt{\text{det}N}}.
\end{split}
\ee
Since $\de>0$ can be as small as we like, \eqref{recon-v2} implies
\be\label{recon-v3}\begin{split}
\lim_{\e\to0} & f_{\chi\e}^{(1)}(\xh)
=\kappa\int_{|\tilde\al^\perp|<A}\int_{[0,1]^3} U\left(\mc h-\frac{\mc N^{-1}\tilde\al^\perp\cdot \tilde\al^\perp}2-\om\cdot\bt_0,\om\right)d\om\,d\tilde\al^\perp.
\end{split}
\ee
The second argument of $U$ is bounded and $N$ is negative definite, so by \eqref{U_compsup} the integral with respect to $\tilde\al^\perp$ over the set $|\tilde\al^\perp|<A$, when $A>0$ is large enough (but fixed), can be replaced by the integral over all $\br^2$. Changing variables and integrating in spherical coordinates gives
\be\label{recon-v4}\begin{split}
\lim_{\e\to0}  f_{\chi\e}^{(1)}(\xh)
&=\kappa\int_{\br^2}\int_{[0,1]^3} U\left(\mc h-\frac{\mc N^{-1}\tilde\al^\perp\cdot \tilde\al^\perp}2-\om\cdot\bt_0,\om\right)d\om\,d\tilde\al^\perp\\
&=\kappa\frac{2\sqrt{\det N}}{\mc}\int_{\br^2} \int_{[0,1]^3}U\left(\mc h+v\cdot v-\om\cdot\bt_0,\om\right) d\om d v\\
&=f_0 \ioi \int_{[0,1]^3}U\left(\mc h+\tau-\om\cdot\bt_0,\om\right) d\om\, d\tau.
\end{split}
\ee

The integral with respect to $\om$ can be evaluated explicitly. Recall that in our coordinates, $\bt_0=(1,0,0)$ (cf. \eqref{bt0def}). From \eqref{psi-def} and \eqref{U-def},
\be\label{Uuint}\begin{split}
-\int_{[0,1]^3}&U(q-\om\cdot\bt_0,\om) d\om\\
&=\int_{[0,1]^3}\left.\frac{\pa^2}{\pa \tau^2}\psi(q-\om\cdot\bt_0,\tau(1,A_2/\mc)+\om)\right|_{\tau=0}d\om\\
&=\left.\frac{\pa^2}{\pa \tau^2}\int_{[0,1]^3}\sum_{j\in r+\mathbb Z^3}(q-\bt_0\cdot (\om-j))_+
\varphi(\tau(1,A_2/\mc)+\om-j)d\om\right|_{\tau=0}\\
&=\left.\frac{\pa^2}{\pa \tau^2}\int_{\br^3}(q-\bt_0\cdot \om)_+
\varphi(\tau(1,A_2/\mc)+\om)d\om\right|_{\tau=0}\\
&=\left.\frac{\pa^2}{\pa \tau^2}\int_{\br^3}(q-\bt_0\cdot \om)_+
\varphi(\tau(1,0)+\om)d\om\right|_{\tau=0}=\hat \varphi(\bt_0,q).
\end{split}
\ee
Substitute \eqref{Uuint} into \eqref{recon-v4} 
\be\label{recon-v5}
\lim_{\e\to0}  f_{\chi\e}^{(1)}(\xh)
=-f_0 \ioi \hat \varphi(\bt_0,\mc h+\tau) d \tau=f_0 \left(-\int^{\infty}_{\mc h} \hat \varphi(\bt_0,\tau) d \tau\right).
\ee
Since $\varphi$ is normalized and compactly supported, 
\begin{equation}\label{U-lims}
-\int^{\infty}_{\mc h} \hat \varphi(\bt_0,\tau) d\tau\to\begin{cases}0,& h>c, \\
-1,& h<-c.\end{cases}
\end{equation}
for some $c>0$.

Using the definition \eqref{crt-def} and some simple transformations, we can rewrite the integral in \eqref{recon-v5} in two different forms
\be\label{phi-int-alt}
\int^{\infty}_{\mc h} \hat \varphi(\bt_0,\tau) d\tau=\int^{\infty}_{|\Phi'_x| h} \hat \varphi(\Phi'_y,\tau) d\tau=\int_{\Phi'_x \tilde x +\Phi'_y\tilde y>0} \varphi(\tilde y) d\tilde y.
\ee

\section{Analysis of the term $f_{\chi\e}^{(2)}$.}\label{lot}

\begin{lemma}\label{tough-cookie} One can find $\omega>0$, $\e_0>0$ small enough and $A>0$ large enough so that $g(y)$ is smooth in a neighborhood of all $y$ such that $(y-Y(\al,0;\xh))/\e\in\text{supp}(\varphi)$ for any $\al\in\Omega_2,\tilde x\in\tilde U$ (cf. \eqref{rec-pt}), and $0<\e<\e_0$. 
\end{lemma}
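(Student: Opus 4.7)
The plan is to reduce smoothness of $g$ at the relevant $y$ to a quantitative non-tangency statement. By the FIO structure of $\R$ with canonical relation \eqref{can-rel}, $g = \R f$ is $C^\infty$ at $y$ provided $\s_y$ is not tangent to $\s$ at any point of $\text{supp}(f)$. Since Theorem~\ref{thm:local} assumes $\text{supp}(f)$ is contained in a small neighborhood of $x_0$, any tangency point lies near $x_0$, so by Lemma~\ref{ssg-smooth} (with $\vs_1$ and $\omega$ shrunk so that $Y(\al,0;\xh)\in\vs_1$ for all $\al\in\Omega_2$ and all admissible $\xh$) tangency of $\s_y$ and $\s$ is equivalent to $P(y)=0$. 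It therefore suffices to show $|P(y)|>0$ strictly, uniformly in $\al \in \Omega_2$, $\xh$, and small $\e$, for all $y$ in the $\e\cdot\text{supp}(\varphi)$-neighborhood of $Y(\al,0;\xh)$.

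The key step is the leading-order expansion of $P(Y(\al,0;\xh))$. Combining $P(y) = \mc^{-1}(y_1 + M_1\al^\perp\cdot\al^\perp/2) + O(\e^{3/2})$ from \eqref{p-st2} with $Y_1(\al,0;\xh) = \mc(\e h - M\al^\perp\cdot\al^\perp/2) + O(\e^{3/2})$ from \eqref{y1-sol}, and using the identity $M_1/\mc = M - N^{-1}$, the $M$-contributions cancel and I obtain
\[
P(Y(\al,0;\xh)) = \e h - \tfrac12 N^{-1}\al^\perp\cdot\al^\perp + O(\e^{3/2}).
\]
Since $N(x_0)$ is negative definite by hypothesis, so is $N^{-1}$, and there exists $c>0$ with $-N^{-1}\al^\perp\cdot\al^\perp \ge c|\al^\perp|^2$. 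On $\Omega_2$, $|\al^\perp|^2 > A^2\e$, so the quadratic term is $\ge (c/2)A^2\e$ and dominates the $\e h$ and $O(\e^{3/2})$ contributions once $A$ is chosen large (depending only on the fixed bound on $|h|$, $c$, and the remainder constants), yielding $|P(Y(\al,0;\xh))|\ge c_1\e$ for some $c_1>0$.

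The expansion above is derived in the paper only under $|\al^\perp|=O(\e^{1/2})$, so the remaining step is to extend it across the full range $|\al^\perp|\in(A\e^{1/2},\omega)$. This follows from the $C^\infty$ dependence of $P\circ Y$ on $(\al^\perp,\e,\tilde x)$: the Taylor expansion has the same quadratic leading part, with super-quadratic errors $O(|\al^\perp|^3+\e|\al^\perp|^2+\e^2)$. For $|\al^\perp|\le\omega$, the cubic term is bounded by $\omega|\al^\perp|^2$, so choosing $\omega$ small absorbs it into the definite quadratic $-\tfrac12 N^{-1}\al^\perp\cdot\al^\perp$. Finally, $P$ is Lipschitz on $\vs_1$ and $\text{supp}(\varphi)$ is compact, so for $y$ with $|y-Y(\al,0;\xh)|\le C_\varphi\e$ one has $|P(y)|\ge (c_1 - LC_\varphi)\e$, which is strictly positive after a final enlargement of $A$; by continuity of $P$ this lower bound persists in an open neighborhood of each such $y$, so $g$ is $C^\infty$ there.

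The main obstacle I anticipate is the cancellation step that produces $N^{-1}$ rather than $M$ as the relevant quadratic form. Definiteness is provided by the hypothesis on $N(x_0)$, while $M$ has no a priori sign; without the identity $M_1/\mc = M - N^{-1}$ (which itself follows only from the careful derivation of \eqref{p-st2}) the lower bound would fail along the null cone of $M$, and the whole argument would break down.
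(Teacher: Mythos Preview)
Your proposal is correct and follows essentially the same approach as the paper's proof, which invokes \eqref{pzsum} together with the negative definiteness of $N(x_0)$ to obtain $P(Y(\al,0;\xh))>c\e$ on $\Omega_2$, then uses smoothness of $P$ and compact support of $\varphi$ to conclude $P(y)>0$ for all relevant $y$ and hence drop the subscript in \eqref{GRT-loc-v3}. You additionally spell out the $M\to N^{-1}$ cancellation and the extension of the expansion across the full range $|\al^\perp|\in(A\e^{1/2},\omega)$ via a smooth Taylor argument, both of which the paper leaves implicit.
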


\begin{proof} 
Fix some $c>0$ sufficiently large. Using that $N(x_0)$ is negative definite, equation \eqref{pzsum} implies that we can find $A>0$ large enough and $\omega>0$ small enough so that $P(Y(\al,0;\xh))>c\e$ for all $\al\in\Omega_2$ and all $\tilde x\in\tilde U$ provided that $\e$ is small enough. Since $\varphi$ is compactly supported and $P(y)$ is smooth, $P(y)>0$ for all $y$ such that $(y-Y(\al,0;\xh))/\e\in\text{supp}(\varphi)$ (this is where we use that $c>0$ is sufficiently large). Therefore, $g$ is a smooth function in a neighborhood of all such $y$ because in this case we can drop the subscript $'+'$ from $P_+(y)$ in \eqref{GRT-loc-v3}.
\end{proof}

Lemma~\ref{tough-cookie} implies that the inversion formula \eqref{inv-formula} and its discrete analogue do not see singularities in the data when $\al\in \Omega_2$ provided that $\omega,A$ are selected as in the proof of Lemma~\ref{tough-cookie}. Since $|\Omega_1|=O(\e)$, clearly the limit
$\lim_{\e\to0^+}f_{\chi\e}^{(2)}(\xh)$ exists and is independent of $\tilde x\in\tilde\us$. 

Using again that $N(x_0)$ is negative definite, there exist sufficiently small neighborhoods $\us_1$ of $x_0$ and $\vs_1$ of $y_0$ such that $\s_y\cap\us_1$ is on the exterior side of $\s$ whenever $\s_y$ is tangent to $\s$ and $y\in\vs_1$. This implies that if $x\in\us_1$ and $x$ is on the interior side of $\s$, then there is no $y\in\vs_1$ such that $\s_y$ contains $x$ and is tangent to $\s$. In turn, this implies that the data $g(y)$, $y=Y(\al,t=0;x)\in\vs_1$, which is used to compute $f_\chi(x)$ is also smooth. Then, clearly, $\lim_{\e\to0^+}(f_{\chi\e}^{(2)}(x_0+\e h\al_0)-f_{\chi}(x_0+\e h\al_0))=0$ for any $h>0$, and
\be\label{f2f3}
\lim_{\e\to0^+}f_{\chi\e}^{(2)}(\xh)=f_{\chi}(x_{0^+}).
\ee
This concludes the proof of the first part of the theorem.

\section{Contribution of remote singularities}\label{remote}

Suppose $\s_{y_0}$ is tangent to $\s$ at some $z_0\in \s_{y_0}$, $z_0\not=x_0$, $\s$ is smooth at $z_0$, and $N(z_0)$ is either positive definite or negative definite. Set $\al_0=\Phi'_x(z_0,y_0)/|\Phi'_x(z_0,y_0)|$, so that $y_0=Y(\al_0,0;x_0)$. As before, $\vs_1$ is a small neighborhood of $y_0$, and $\text{supp}(\chi)\subset\vs_1$.

As follows from assertion (1) of Lemma~\ref{ssg-smooth} (with $x_0$ replaced by $z_0$ as the point of tangency), the set of $y\in\vs_1$ such that $\s_y$ is tangent to $\s$ near $z_0$ is a smooth submanifold of $\vs_1$ through $y_0$, and the vector $\Phi'_y(z_0,y_0)$ is normal to it at $y_0$. The local equation of the manifold is $P(y)=0$, where the function $P$ is the same as in Lemma~\ref{ssg-smooth}. By assertion (2) of the lemma, $P'_y(y_0)\not=0$. 

Additionally, $\T_{x_0}$ is another smooth submanifold through $y_0$, and $\Phi'_y(x_0,y_0)$ is normal to it at $y_0$. By the assumption DF3 of no conjugate points, $\Phi'_y(x_0,y_0)$ and $\Phi'_y(z_0,y_0)$ are not parallel, so the intersection of the two submanifolds is a smooth curve $\Gamma_{x_0}$ through $y_0$. This is the same curve $\Gamma_{x_0}$, which was introduced in \eqref{ga-def}. From this argument it is easy to see that  $\Gamma_{x}$ depends smoothly on $x$ near $x=x_0$.

\begin{theorem}\label{thm:remote} Pick a globally generic pair $(x_0,y_0)\in\ir$ such that $x_0\not\in \s$ and $\s_{y_0}$ is tangent to $\s$ at $z_0\in \s$. Suppose $\text{supp}(f)$ is contained in a sufficiently small neighborhood of $z_0$. One has 
\be\label{res-remote}
\lim_{\e\to0}f_{\chi\e}(\xh)=f_{\chi}(x_0).
\ee
\end{theorem}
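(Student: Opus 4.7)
The approach mirrors the local analysis of Sections~\ref{local-grt}--\ref{lot}, now with $\xh=x_0+\e\tilde x$ at positive distance from $\s$ and the singularity of $g$ at $y_0$ generated by the remote tangency point $z_0$. By Lemma~\ref{ssg-smooth} applied at $z_0$ in place of $x_0$, the set of $y\in\vs_1$ for which $\s_y$ is tangent to $\s$ near $z_0$ is a smooth submanifold $\{P(y)=0\}$, and by Lemma~\ref{der-g}, $g(\oy(z,p))=p_+G(z,p)+G_1(z,p)$ with $G,G_1$ smooth. Thus $g$ is $C^\infty$ off $\{P=0\}$ and has a ramp-type singularity transverse to it. Since $f_\chi$ is continuous at $x_0$ and $\xh\to x_0$, \eqref{res-remote} is equivalent to
\be\label{rem-diff-goal}
\lim_{\e\to0}\bigl(f_{\chi\e}(\xh)-f_\chi(\xh)\bigr)=0,
\ee
whose left-hand side equals $-(4\pi^2)^{-1}\int_{S_+^2}(\chi/b)(\pa/\pa t)^2(g_\e-g)(Y(\al,t;\xh))|_{t=0}\,d\al$.

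I split $S_+^2=\Omega_1^*\cup\Omega_2^*$ with
\be\label{omega-rem-star}
\Omega_1^*:=\{\al\in S_+^2:|P(Y(\al,0;\xh))|<A\e\},\quad \Omega_2^*:=S_+^2\setminus\Omega_1^*,
\ee
for a large fixed $A>0$. On $\Omega_2^*$, a Lemma~\ref{tough-cookie}-style argument (using compactness of $\text{supp}(\varphi)$, smoothness of $P$, and choosing $A$ large enough) guarantees that $g$ is $C^\infty$ throughout the $O(\e)$ interpolation stencil used to form $g_\e(Y(\al,t;\xh))$; standard interpolation estimates based on IK1 then give $g_\e-g=O(\e^2)$ in $C^2$ uniformly on these stencils, so the $\Omega_2^*$ contribution to \eqref{rem-diff-goal} is $O(\e^2)\to0$.

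The $\Omega_1^*$ contribution is the main point. Introduce adapted coordinates $(\sigma,\tau)$ on $\T_\xh$ near its intersection with $\{P=0\}$, where $\sigma$ is an arclength parameter along $\Gamma_\xh$ and $\tau=P$ is transverse, and rescale $\tilde\tau=\tau/\e$ so that $\al\in\Omega_1^*$ corresponds to $|\tilde\tau|<A$. Repeating the chain of manipulations \eqref{f-int-Y}--\eqref{recon} in this $1+1$ setting (one tangential coordinate and one transverse coordinate, rather than two transverse), the $\Omega_1^*$ contribution to the difference $f_{\chi\e}(\xh)-f_\chi(\xh)$ can be written as
\be\label{omega1-rem-rep}
\int d\sigma\int_{|\tilde\tau|<A}\mathcal{K}\!\left(\sigma,\tilde\tau;\frac{y(\sigma)}{\e}+B(\sigma,\tilde\tau;\e)\right)d\tilde\tau+o(1),
\ee
where $y(\sigma)\in\br^3$ parametrizes $\Gamma_\xh$, $B$ is smooth, and $\mathcal K$ is bounded and $\mathbb Z^3$-periodic in its third argument; the $O(\e^{-1})$ pointwise size of $(\pa/\pa t)^2(g_\e-g)$ is absorbed into the Jacobian of the change of variables. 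Expanding $\mathcal K$ in a Fourier series in its third argument (with coefficients summable thanks to IK1--IK4), the $m=0$ mode reproduces, term by term, the analogous $\Omega_1^*$ piece of $f_\chi(\xh)$ computed from the same distributional formula applied to $g$ rather than $g_\e$, hence cancels in the difference. For each $m\in\mathbb Z^3\setminus\{0\}$, the corresponding mode is an oscillatory $\sigma$-integral of a bounded amplitude against $\exp(2\pi i m\cdot y(\sigma)/\e)$, whose $\sigma$-derivative of the phase is $2\pi m\cdot\dot\Gamma_{x_0}(y(\sigma))/\e$. Condition GG2 guarantees that the set of $\sigma$ where this derivative is small can be made to have arbitrarily small total length (uniformly after $m$-dependent rescaling), so a Van~der~Corput/integration-by-parts argument, with the endpoints on the exceptional set controlled by its length, drives each non-zero mode to $0$ as $\e\to0$.

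The main obstacle is precisely this $\Omega_1^*$ oscillatory analysis: $g_\e-g$ is $O(1)$ pointwise there, so the cancellation must be extracted uniformly in the Fourier index $m$ while tolerating the exceptional $\sigma$-sets where the phase becomes stationary. Condition GG2 is calibrated exactly to bound the measure of those sets, and combined with the smoothness of $\varphi$ (which makes the Fourier coefficients of $\mathcal K$ absolutely summable) it yields term-by-term vanishing and permits passage to the limit inside the sum, establishing \eqref{rem-diff-goal} and hence \eqref{res-remote}.
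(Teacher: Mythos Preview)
Your proposal follows the same core mechanism as the paper: cast the interpolated data as a function of $Y(\al,0;\xh)/\e$ that is $\mathbb Z^3$-periodic, Fourier-expand, identify the zero mode with the continuous reconstruction, and kill the nonzero modes by an equidistribution argument along $\Gamma_{x_0}$ powered by Condition GG2. The main organizational difference is that you split geometrically into $\Omega_1^*/\Omega_2^*$ and work with the difference $f_{\chi\e}-f_\chi$, whereas the paper computes $\lim_{\e\to0}f_{\chi\e}(x_0)$ directly via an \emph{algebraic} decomposition of $g=P_+G$: it replaces $P$ by its linear Taylor approximation $L$ centered at $Y(\al,0)$, splits the chain-rule expansion of $\pa_t^2\varphi$ into pieces $J_\e^{(1a)},J_\e^{(1b)},J_\e^{(1c)},J_\e^{(2)}$, and passes to the limit in each using a dedicated equidistribution lemma (the paper's Lemma~\ref{key-ass-res}). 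The paper then spends most of its effort on the step you simply assert---showing that the $m=0$ contributions assemble exactly into $f_\chi(x_0)$---which requires the delicate matching in \eqref{jlst1a-v4}, \eqref{jlst-pb-lim}, \eqref{jlst-pc-lim}, \eqref{jlst-2a-st2} and the cancellation \eqref{coef-diff}. That computation is not automatic and is the real content of the proof; your sketch hides it.

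Two smaller technical points. First, your $\Omega_2^*$ estimate $g_\e-g=O(\e^2)$ in $C^2$ is too strong: with $\varphi$ exact only to order~2, the second derivative error is $O(\e)$, not $O(\e^2)$; this still suffices. Second, your appeal to absolute summability of the Fourier coefficients of $\mathcal K$ is unsafe, since $\mathcal K$ is built from $\nabla^2\varphi$, which by IK3--IK4 is only continuous with bounded but possibly discontinuous gradient; in three periodic variables this need not give $\ell^1$ decay. The paper's Lemma~\ref{key-ass-res} sidesteps this by approximating the periodic factor uniformly by trigonometric polynomials of bounded degree and then using GG2 for finitely many $m$, which is the cleaner route.
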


\begin{proof} Suppose first that the reconstruction point is $x_0$. Since the reconstruction point is fixed, the dependence of various quantities on $x_0$ is omitted from notations in most places when there is no risk of confusion. Then
\be\label{inv-formula-p1}\begin{split}
f_{\chi\e}(x_0)=&\int_{S_+^2}B(\al) \sum_j
g(\e j)\left.\left(\frac{\pa}{\pa t}\right)^2 \ik\left(\frac{Y(\al,t)-\e j}\e\right)\right|_{t=0} d\al,\\
B(\al):=&-\frac{\chi(Y(\al,0))}{4\pi^2}\frac{1} {b(x_0,Y(\al,0))}.
\end{split}
\ee
By Lemma~\ref{der-g}, 
%\be\label{GRT-loc-v2-parts}
%\begin{split}
%g^{(1)}(Y(\al,t;x_0)):=&(t-t_0(\al,x_0))_+G_0(\al),\ G_0(\al):=G(\al,t_0(\al,x_0);x_0),\\
%g^{(2)}(Y(\al,t;x_0)):=&(t-t_0(\al,x_0))_+(G(\al,t;x_0)-G_0(\al)).
%\end{split}
%\ee
\be\label{GRT-loc-v2-parts}
g(y):=P_+(y)G(y),\ G(y):=G(Z(y),P(y)),
\ee
%Here $z(y)$ and $p(y)$ are smooth functions described in statement (2) of Lemma~\ref{ssg-smooth}. Substitute $g^{(1)}$ into the inversion formula.
%\be\label{inv-formula-p1}\begin{split}
%f_{\chi}^{(1)}(x_0)=&\int_{S^2}b(\al) \sum_j
%(\ot(\e j)-t_0(\oal(\e j)))_+G_0(\oal(\e j))\\
%&\times\left.\left(\frac{\pa}{\pa t}\right)^2 \ik\left(\frac{Y(\al,t)-\e j}\e\right)\right|_{t=0} d\al,\
%b(\al):=-\frac{\chi(\al)}{8\pi^2}\frac{w(x_0,\al)} {b(x_0,Y(\al,0))}.
%\end{split}
%\ee
%\be\label{inv-formula-p1}\begin{split}
%f_\e^{(1)}(x_0):=&\int_{S_+^2}B(\al) \sum_j
%g^{(1)}(\e j)\left.\left(\frac{\pa}{\pa t}\right)^2 \ik\left(\frac{Y(\al,t)-\e j}\e\right)\right|_{t=0} d\al,\\
%B(\al):=&-\frac{\chi(Y(\al,0))}{4\pi^2}\frac{1} {b(x_0,Y(\al,0))}.
%\end{split}
%\ee
%\textcolor{red}{By construction, $t\equiv\ot(Y(\al,t))$ and $\al\equiv\oal(Y(\al,t))$, so
%\be\label{prop-ders}
%\oal'_y Y'_t\equiv0,\ \ot'_y\cdot Y'_t\equiv 1.
%\ee}
Since $\ik$ is compactly supported, we can expand the factor $P(y)$ in \eqref{GRT-loc-v2-parts} in the Taylor series centered at $Y(\al,0)$. Let $L(y)$ be its linear term:
\be\label{lin-term}
L(y):=P(Y(\al,0))+P'_y(Y(\al,0))\cdot (y-Y(\al,0)).
\ee
%write
%\be\label{tt0-appr}\begin{split}
%t(\e j)&=\left.\frac{dt(y)}{dy}\right|_{y=Y(\al,0)}(\e j-Y(\al,0))+O(\e^2),\\
%t_0(\e j)&=t_0(\al)+\frac{dt_0(\al)}{d\al}\left.\frac{d\al(y)}{dy}\right|_{y=Y(\al,0)}(\e j-Y(\al,0))+O(\e^2).
%\end{split}
%\ee
We begin by looking at the expression, which is obtained by ignoring the second and higher order terms in the expansion of $P$:
\be\label{inv-formula-p2}\begin{split}
J_\e^{(1)}:=&\int_{S_+^2}B(\al) \sum_j G(\e j)L_+(\e j)
\left.\left(\frac{\pa}{\pa t}\right)^2 \ik\left(\frac{Y(\al,t)-\e j}\e\right)\right|_{t=0} d\al.
\end{split}
\ee
Clearly,
\be\label{ikderive}\begin{split}
&\left.\left(\frac{\pa}{\pa t}\right)^2  \ik\left(\frac{Y(\al,t)-\e j}\e\right)\right|_{t=0} d\al\\
&\quad=\frac1{\e^2}\nabla_{u(\al)}^2\ik\left(\frac{Y(\al,0)}\e-j\right)
+\frac1{\e}\ik'_y\left(\frac{Y(\al,0)}\e-j\right)\cdot Y_{tt}''(\al,0),\\ 
u(\al)&:=Y'_t(\al,0),\ \nabla_{u}^2\ik(y)=\left.\left(\frac{\pa}{\pa t}\right)^2 \ik(y+tu)\right|_{t=0}.
\end{split}
\ee
Consider the most singular part of $J_\e^{(1)}$, which is obtained by using the first term on the right in \eqref{ikderive} and replacing $G(\e j)$ with $G(Y(\al,0))$:
\be\label{jlst}\begin{split}
J_\e^{(1a)}:=&\int_{S_+^2}B_1(\al) \sum_j L_+(\e j) \frac1{\e^2}\nabla_{u(\al)}^2\ik\left(\frac{Y(\al,0)}\e-j\right) d\al,\\ B_1(\al):=&B(\al)G(Y(\al,0)).
\end{split}
\ee
%Here we have used that $G_0(\al(\e j))=G_0(\al)+O(\e)$. This allowed us to replace $G_0(\al(\e j))$ with $G_0(\al)$ and absorb the latter by the function $b(\al)$.
In view of \eqref{lin-term} and \eqref{jlst}, similarly to \eqref{psi-def} and \eqref{U-def}, introduce the function
\be\label{psi-new}
\psi(q,v;\al):=\sum_j
\left(e(\al)\cdot(j-v)+q\right)_+ \nabla_{u(\al)}^2\ik\left(v-j\right),\ e(\al):=P'_y(Y(\al,0)).
\ee
Clearly, 
\begin{enumerate}
\item\label{psi1} $\psi$ is compactly supported in $q$ (by \eqref{ker-int}),
\item\label{psi2} $\psi$ has bounded first order partial derivatives, and 
\item\label{psi3} $\psi(q,v;\al)=\psi(q,v-m;\al)$ for any $m\in\mathbb Z^3$. 
\end{enumerate}
Using \eqref{psi-new} in \eqref{jlst} yields:
\be\label{jlst-v2}\begin{split}
J_\e^{(1a)}=&\frac1{\e}\int_{S_+^2}B_1(\al) \psi\left(\frac{P(Y(\al,0))}{\e},\frac{Y(\al,0)}\e;\al\right) d\al.
\end{split}
\ee

Introduce local coordinates $s=(s_1,s_2)$ on $S_+^2$ so that $s_1\equiv P(Y(\al,0))$ in a neighborhood of $\al_0$. As is shown at the beginning of this section, $\Gamma$ is the transverse intersection of the submanifolds $\T_{x_0}$ and $\{y\in \text{supp}(\chi):\,P(y)=0\}$. By the first equation in \eqref{Y-prp}, $\al\to Y(\al,0)\in\T_{x_0}$ is a regular parametrization near $\al_0$. In \eqref{Y-prp}, $y_0=Y(\al_0,0;x_0)$, and $Y^\perp$ is determined by the projection onto the plane $(\Phi_y'(x_0,y_0))^\perp$ (as opposed to $(\Phi_y'(z_0,y_0))^\perp$). Hence $P'_y(y_0)\not=0$ (cf. \eqref{pprime}) implies $\pa P(Y(\al,0))/\pa\al^\perp\not=0$ near $\al_0$. Therefore the preimage of $\Gamma\cap\vs_1$, given by $\{\al\in S_+^2:\,P(Y(\al,0))=0,\,Y(\al,0)\in\vs_1\}$ is also a smooth curve, and local coordinates $(s_1,s_2)$ with the required property do exist. Then 
\be\label{jlst-v3}\begin{split}
J_\e^{(1a)}=&\frac1{\e}\int_{\br^2} B_1(\al(s)) \psi\left(\frac{P(Y(\al(s),0))}{\e},\frac{Y(\al(s),0)}\e;\al(s)\right) \left|\frac{\pa \al}{\pa s}\right|ds+O(\e)\\
=&\int_{\br^2} B_2(s_2) \psi\biggl(\tilde s_1,\frac{Y(\al(0,s_2),0)}\e+\left.\frac{\pa Y(\al(s_1,s_2),0)}{\pa s_1}\right|_{s_1=0}\tilde s_1;\al(0,s_2)\biggr) d\tilde s_1 ds_2\\
&\hspace{3cm}
+O(\e),\\
&B_2(s_2):=B_1(\al(0,s_2))\left|\frac{\pa \al(s_1=0,s_2)}{\pa s}\right|,\
\tilde s_1=s_1/\e.
\end{split}
\ee
In the first line, the integral is over the bounded set $\{s\in\br^2:\,Y(\al(s),0)\in\text{supp}(\chi)\}$. In the second line, the integral can be confined to a bounded set $\{(\tilde s_1,s_2)\in\br^2:\, |\tilde s_1|<\tilde A,Y(\al(0,s_2))\in \text{supp}(\chi)\}$ for some $\tilde A>0$ large enough.

\begin{lemma}\label{key-ass-res} Let $D$ be a rectangle $D:=[a_1,b_1]\times [a_2,b_2]$. 
Consider a function $\psi\in C(D\times\br^3)$. Suppose 
%\begin{enumerate}
%\item $\psi$ is compactly supported in $s_{1,2}$: $\psi(s_1,s_2,y)\equiv0$ if $|s_1|+|s_2|>A$ for some $A>0$, and
%\item 
$\psi$ is periodic: $\psi(s,y)=\psi(s,y+m)$ for any $m\in\mathbb Z^3$ and $(s,y)\in D\times\br^3$.
%\end{enumerate}
Let $Y:[a_2,b_2]\to\br^3$ be a $C^1$ function with the following property. For any $m\in\mathbb Z^3$, $|m|>0$, 
\begin{enumerate}
\item The set $\{s_2\in[a_2,b_2]:\, |m\cdot Y'(s_2)|\le\de\}$ is contained in a finite number of intervals for all $\de>0$ sufficiently small (this number may depend on $m$ and $\de$), and 
\item The sum of the lengths of these intervals goes to zero as $\de\to0$.  
\end{enumerate}
Then one has
\be\label{limiint}\begin{split}
\lim_{\e\to0^+}\int_D\psi\left(s,\frac{Y(s_2)}{\e}\right)ds
=\int_D\int_{[0,1]^3}\psi(s,y)dyds.
\end{split}
\ee
\end{lemma}
\begin{proof} Pick any $\de_1>0$. Let $\Psi_m(s)$ denote the coefficients of the Fourier expansion of $\psi(s,y)$ with respect to $y$. We can find $M>0$ large enough and a partition of $D$ into sufficiently small rectangles such that 
\be\label{trigpol}\begin{split}
\sup_{s,y}\left|\psi(s,y)-\sum_{|m|\le M}\tilde\Psi_m(s)\exp\left(2\pi i m\cdot y\right)\right|\le\de_1.
\end{split}
\ee
Here $\tilde\Psi_m$ is an approximation of $\Psi_m$, which is constant on each rectangle of the partition. Thus, the lemma will be proven if we show that
\be\label{keyres}
\int_a^b \exp\left(2\pi i m\cdot Y(s_2)/\e\right)ds_2 \to 0,\ \e\to0,\text{ for any }[a,b]\subset [a_2,b_2],\ m\in\mathbb Z^3,|m|>0.
\ee
Using assumptions (1) and (2) of the lemma, partition $[a,b]$ into a finite collection of non-overlapping intervals so that (i) their union is as close to $[a,b]$ as we like, and (ii) in each of these intervals $m\cdot Y'(s_2)$ is bounded away from zero. The result now follows immediately. 
\end{proof}

Clearly, Condition GG2 in Definition~\ref{gpnotonS} is independent of the choice of the vector field $\dot\Gamma$ as long as it does not vanish at any point of $\Gamma$. In the $s$-coordinates, $Y(\al(0,s_2),0)$ is a regular parametrization of $\Gamma\cap\vs_1$ because $|\pa Y^\perp/\pa\al^\perp|_{\al=\al_0}\not=0$ and $|\pa\al^\perp/\pa s|\not=0$ (see also the argument following \eqref{jlst-v2}). The latter determinant is computed at $s$ such that $\al(s)=\al_0$. 
Therefore $\pa Y/\pa s_2$ never vanishes on $\Gamma\cap\vs_1$. Condition GG2 implies that $Y(\al(0,s_2))$ satisfies Conditions (1), (2) in Lemma~\ref{key-ass-res}. Set
\be\label{psi1def}
\psi_1(s,y):=B_2(s_2) \psi\biggl(\tilde s_1,y+\left.\frac{\pa Y(\al(s_1,s_2),0)}{\pa s_1}\right|_{s_1=0}\tilde s_1;\al(0,s_2)\biggr),\ s=(\tilde s_1,s_2).
\ee
Using the properties (\ref{psi1})--(\ref{psi3}) of $\psi$, we see that Lemma~\ref{key-ass-res} applies to $\psi_1$. Also, $\psi_1$ is compactly supported. Compact support along $\tilde s_1$ is due to the property (\ref{psi1}) of $\psi$, and along $s_2$ -- due to the cut-off $\chi$. Substituting $\psi_1$ into \eqref{jlst-v3}, using Lemma~\ref{key-ass-res}, and then expressing $\psi_1$ in term of $\psi$ yields
\be\label{jlst1a-v2}\begin{split}
\lim_{\e\to0}J_\e^{(1a)}=
\int_{\br^2} B_2(s_2)\left[\int_{[0,1]^3}\psi\left(\tilde s_1,v;\al(0,s_2)\right)dv\right] d\tilde s_1 ds_2.
\end{split}
\ee
By \eqref{psi-new}, similarly to \eqref{Uuint},
\be\label{ints1}\begin{split}
&\int_{[0,1]^3} \sum_j
\left(e\cdot(j-v)+q\right)_+ \nabla_u^2\ik\left(v-j\right)dv\\
&=\int_{\br^3} 
\left(-e\cdot v+q\right)_+ \nabla_u^2\ik(v)dv
={(e\cdot u)^2}\hat\ik(e,q).
\end{split}
\ee
As $|e|$ not necessarily equals one, \eqref{ints1} assumes the extended definition of the CRT, cf. \eqref{crt-def}. By \eqref{ikderive} and \eqref{psi-new}, 
\be\label{edotu}
e(\al)\cdot u(\al)=\left.\pa P(Y(\al,t))/\pa t\right|_{t=0}=:P'_t(\al),\ \al=\al(0,s_2).
\ee
%Recall that $|e(\al)|\not=0$ since $p'_y(y_0)\not=0$. 
With $\ik$ normalized, using \eqref{ints1} with $q=\tilde s_1$ and \eqref{edotu} in \eqref{jlst1a-v2} gives
\be\label{jlst1a-v3}\begin{split}
\lim_{\e\to0}J_\e^{(1a)}&=
\int_{\br} B_2(s_2)(P'_t(\al(0,s_2)))^2\int\hat\ik(e(\al(0,s_2)),\tilde s_1)d\tilde s_1  ds_2\\
&=\int_{\br} B_2(s_2)(P'_t(\al(0,s_2)))^2ds_2.
\end{split}
\ee
Consequently, from \eqref{jlst-v3} we get
\be\label{jlst1a-v4}\begin{split}
\lim_{\e\to0}J_\e^{(1a)}=&\int_{\br} B_1(\al(0,s_2))\left|\frac{\pa \al(0,s_2)}{\pa s}\right|(P'_t(\al(0,s_2)))^2 ds_2\\
=&\int_{\br^2} B_1(\al(s))(P'_t(\al(s)))^2 \delta(P(Y(\al(s),0)))\left|\frac{\pa \al(s)}{\pa s}\right|ds\\
=&\int_{S_+^2} B_1(\al) \left(\left.\frac{\pa P(Y(\al,t))}{\pa t}\right|_{t=0}\right)^2 \delta(P(Y(\al,0)))d\al.
\end{split}
\ee
In the second line we used that $s=(s_1,s_2)$ and $s_1\equiv P(Y(\al(s),0))$.

Next, consider the second part of $J_\e^{(1)}$, which is obtained by using the second term on the right in \eqref{ikderive} and replacing $G(\e j)$ with $G(Y(\al,0))$:
\be\label{jlst-pb}\begin{split}
J_\e^{(1b)}:=&\int_{S_+^2}B_1(\al) \sum_j 
L_+(\e j) \frac1{\e}\ik'_y\left(\frac{Y(\al,0)}\e-j\right)\cdot Y''_{tt}(\al,0) d\al.
\end{split}
\ee
The function $L_+(y)$ has bounded first derivatives, hence the limit of $J_\e^{(1b)}$ can be easily found:
\be\label{jlst-pb-lim}\begin{split}
\lim_{\e\to0}J_\e^{(1b)}=&\lim_{\e\to0}\int_{S_+^2}B_1(\al) \sum_j
L_+(\e j) \frac1{\e}\ik'_y\left(\frac{Y(\al,0)-\e j}\e\right)\cdot Y''_{tt}(\al,0) d\al\\
=&\lim_{\e\to0}\int_{S_+^2}B_1(\al) \left.\pa_y\sum_j
L_+(\e j) \ik\left(\frac{Y(\al,0)+y-\e j}\e\right)\right|_{y=0}\cdot Y''_{tt}(\al,0) d\al\\
=&\int_{S_+^2}B_1(\al) \left.\pa_y
L_+(Y(\al,0)+y) \right|_{y=0}\cdot Y''_{tt}(\al,0) d\al\\
=&\int_{S_+^2}B_1(\al) P'_y(Y(\al,0))\cdot Y''_{tt}(\al,0) \theta(P(Y(\al,0)))d\al.
\end{split}
\ee
Recall that $P>0$ on the interior side of $\s$.

The final piece of $J_\e^{(1)}$ is
\be\label{jlst-pc}\begin{split}
J_\e^{(1c)}:=&\int_{S_+^2}B(\al) \sum_j (G(\e j)-G(Y(\al,0))) 
L_+(\e j)\\
&\hspace{2cm}\times \left.\left(\frac{\pa}{\pa t}\right)^2 \ik\left(\frac{Y(\al,t)-\e j}\e\right)\right|_{t=0} d\al.
\end{split}
\ee
In an $O(\e)$ neighborhood of $\Gamma$, we have $|L(y)|=O(\e)$. 
%Using the expansion
%\be\label{Gexp}
%G(\e j)-G(Y(\al,0))=G'_y(\e j-Y(\al,0))+\frac{G''_{yy}(\e j-Y(\al,0))\cdot(\e j-Y(\al,0))}2+O(\e^3),
%\ee
For any fixed $y\in\vs_1$, $P(y)>0$, we compute by dropping the subscript `$+$' from $L$:
\be\label{smpl}\begin{split}
&\sum_j (G(\e j)-G(Y(\al,0))) 
L(\e j)  \left.\left(\frac{\pa}{\pa t}\right)^2 \ik\left(\frac{Y(\al,t)-\e j}\e\right)\right|_{t=0}\\
&=\left.\left(\frac{\pa}{\pa t}\right)^2 \left[(G(Y(\al,t))-G(Y(\al,0)))L(Y(\al,t))\right]\right|_{t=0}+O(\e).
\end{split}
\ee
Substitution into \eqref{jlst-pc} gives
\be\label{jlst-pc-lim}\begin{split}
&\lim_{\e\to0}J_\e^{(1c)}\\
&=\int_{S_+^2}B(\al) \left.\left(\frac{\pa}{\pa t}\right)^2 \left[(G(Y(\al,t))-G(Y(\al,0)))L(Y(\al,t))\right]\right|_{t=0} \theta(P(Y(\al,0)))d\al.
\end{split}
\ee
We can apply the limit as $\e\to0$ inside the integral in \eqref{jlst-pc} to obtain \eqref{jlst-pc-lim} because the integrand is uniformly bounded. This follows because the function $(G(y)-G(Y(\al,0)))L(y)$ is smooth away from an $O(\e)$ neighborhood of $\Gamma$, and the integrand is $O(1)$ within that neighborhood.

The final term to be considered arises because of the difference between $P(y)$ (cf. \eqref{GRT-loc-v2-parts}) and its linear approximation $L(y)$ (cf. \eqref{lin-term}):
\be\label{jlst-2a}\begin{split}
J_\e^{(2)}:=&\int_{S_+^2}B(\al) \sum_j G(\e j)
\left[P_+(\e j) - L_+(\e j)\right]\left.\left(\frac{\pa}{\pa t}\right)^2 \ik\left(\frac{Y(\al,t)-\e j}\e\right)\right|_{t=0} d\al.
\end{split}
\ee
%\be\label{jlst-2b}\begin{split}
%J_\e^{(2b)}:=&\int_{S_+^2}B(\al) \sum_j G(\e j)
%\left[(p(\e j))_+ - \left(L(\e j)\right)_+\right]\\ &\times\frac1{\e}\ik'_y\left(\frac{Y(\al,0)}\e-j\right)\cdot Y''_{tt}(\al,0) d\al,
%\end{split}
%\ee
In the domain where  $P(\e j)$ and $L(\e j)$ are both positive, we have 
\be\label{scndtrms}
P_+(\e j)-L_+(\e j)=\frac12 P''_{yy}(Y(\al,0))(\e j-Y(\al,0))\cdot(\e j-Y(\al,0))+O(\e^3).
\ee
This difference is zero if $P(\e j)$ and $L(\e j)$ are both negative. Thus,
\be\label{jlst-2a-st1}\begin{split}
\lim_{\e\to0}&\sum_j G(\e j)
\left[P(\e j) - L(\e j)\right] \left.\left(\frac{\pa}{\pa t}\right)^2 \ik\left(\frac{Y(\al,t)-\e j}\e\right)\right|_{t=0}\\
&=G(Y(\al,0))P''_{yy}(Y(\al,0))Y'_t(\al,0)\cdot Y'_t(\al,0),\ P(Y(\al,0))>0.
\end{split}
\ee
In the region where $P(Y(\al,0))<0$, the limit is obviously zero. Hence
\be\label{jlst-2a-st2}\begin{split}
\lim_{\e\to0}J_\e^{(2)}=\int_{S_+^2}B_1(\al)P''_{yy}(Y(\al,0))Y'_t(\al,0)\cdot Y'_t(\al,0)\theta(P(Y(\al,0))) d\al.
\end{split}
\ee
As before, we can apply the limit as $\e\to0$ inside the integral in \eqref{jlst-2a} to obtain \eqref{jlst-2a-st2} because the integrand is uniformly bounded. Indeed,
\be\label{diffsize}
P_+(y)-L_+(y)=(L(y)+O(\e^2))_+-L_+(y)=O(\e^2),
\ee
so the integrand in \eqref{jlst-2a} remains bounded as $\e\to0$. The domain where $P(y)$ and $L(y)$ are of different signs is a shrinking $O(\e)$ neighborhood of $\Gamma$, and the desired result follows.
%Combine \eqref{jlst-pb-lim} and \eqref{jlst-2a-st2}. Since 
%\be\label{J-comb}\begin{split}
%&e(\al)\cdot Y''_{tt}(\al,0)+p''_{yy}(Y(\al,0)) u(\al)\cdot u(\al)\\
%&=p'_y(Y(\al,0))\cdot Y''_{tt}(\al,0)+p''_{yy}(Y(\al,0)) Y'_{t}(\al,0)\cdot Y'_{t}(\al,0)\\
%&=\left.\frac{\pa^2}{\pa t^2} p(Y(\al,t))\right|_{t=0}=\left.\frac{\pa^2}{\pa t^2} \left(\ot(Y(\al,t))-t_0(\oal(Y(\al,t)))\right)\right|_{t=0}\\
%&=\left.\frac{\pa^2}{\pa t^2} \left(t-t_0(\al)\right)\right|_{t=0}=0,
%\end{split}
%\ee
%this leads to $J_\e^{(1b)}+J_\e^{(2a)}\to0$ as $\e\to0$. Indeed, for $y$ in an $\e$-neighborhood of any $Y(\al,0)$, i.e. not necessarily $\al\in\Gamma$, we have

Combining \eqref{jlst1a-v4}, \eqref{jlst-pb-lim}, \eqref{jlst-pc-lim}, and \eqref{jlst-2a-st2} gives the result, which, in compact form, can be written as follows
\be\label{f1lim}\begin{split}
&\lim_{\e\to0}f_{\chi\e}(x_0)\\
&=\int_{S_+^2} B(\al) [G_0(\pa_t P)^2\de(P_0)+(G_0P'_yY''_{tt}+\pa_t^2((G-G_0)L)+G_0P''_{yy}Y'_t\cdot Y'_t)\theta(P_0)] d\al\\
&=\int_{S_+^2} B(\al) [G_0(\pa_t P)^2\de(P_0)+(G_0\pa_t^2P+\pa_t^2((G-G_0)L))\theta(P_0)] d\al.
\end{split}
\ee
Here $G_0:=G(Y(\al,0))$, $G:=G(Y(\al,t))$, $P:=P(Y(\al,t))$, $P_0:=P(Y(\al,0))$, and the derivatives with respect to $t$ are evaluated at $t=0$. This coincides with what we get by substituting $g=P_+G$ into the continuous inversion formula \eqref{inv-formula}. Indeed, representing $P_+G=(GP)\theta(P)$, we have 
\be\label{cont-ver}\begin{split}
\pa_t^2((GP)\theta(P))&=\pa_t[\pa_t(GP)\theta(P)+(GP)\de(P)\pa_t P]=\pa_t[\pa_t(GP)\theta(P)]\\
&=\pa_t(GP)\pa_t P\de(P_0)+\pa_t^2(GP)\theta(P_0)\\
&=G_0(\pa_t P)^2\de(P_0)+\pa_t^2(GP)\theta(P_0).
\end{split}
\ee
The coefficients in front of the delta-function in \eqref{f1lim} and \eqref{cont-ver} match. Subtracting the coefficients in front of the Heaviside function gives:
\be\label{coef-diff}\begin{split}
[G_0\pa_t^2P+\pa_t^2((G-G_0)L)] - \pa_t^2(GP)=-\pa_t^2[(G-G_0)(P-L)]=0.
\end{split}
\ee
Here we have used that $G_0$ is independent of $t$, and the expression under the derivative has a zero of third order at $t=0$. Thus the theorem is proven in the case $x=x_0$.

Next, consider the case of a general $\xh:=x_0+\e \tilde x$ (cf. \eqref{res-remote}). We begin by repeating the steps \eqref{GRT-loc-v2-parts}--\eqref{jlst-v2}, where all the auxiliary functions, such as $Y$, are computed using $\xh$ instead of $x_0$. It is clear that in any place where an auxiliary function is not divided by $\e$, e.g. $B(\al)$ in \eqref{inv-formula-p2} and $e(\al)$, $u(\al)$ in \eqref{psi-new}, replacing $\xh$ with $x_0$ introduces an error of magnitude $O(\e)$. Here we also used the property (2) of $\psi$. Consequently, the analogue of \eqref{jlst-v2} for $\xh$ becomes:
\be\label{jlst-alt}\begin{split}
J_\e^{(1a)}(\xh)=&\frac1{\e}\int_{S_+^2}B_1(\al) \psi\left(\frac{P(Y(\al,0;\xh))}{\e},\frac{Y(\al,0;\xh)}\e;\al\right) d\al + O(\e),
\end{split}
\ee
where only $Y$ is different from the analogous function in \eqref{jlst-v2}. Note that $P(y)$ depends only on the shape of $\s$ in a neighborhood of $x_0$ and, therefore, is independent of $\xh$. We have
\be\label{t0Y-expansion}\begin{split}
Y(\al,0;\xh)&=Y(\al,0)+\e W(\al,\tilde x) + O(\e^2),\\ 
P(Y(\al,0;\xh))&=P(Y(\al,0))+\e P'_y(Y(\al,0))W(\al,\tilde x)+O(\e^2),
\end{split}
\ee
for some smooth and bounded $W$. Here $Y(\al,0)$ is the same as in \eqref{jlst-v2}. Substituting into \eqref{jlst-alt} gives
\be\label{jlst-alt-smpl}\begin{split}
J_\e^{(1a)}(\xh)=&\frac1{\e}\int_{S_+^2}B_1(\al) \psi\biggl(\frac{P(Y(\al,0))}{\e}+P'_y(Y(\al,0))W(\al,\tilde x),\\
&\hspace{3cm}\frac{Y(\al,0)}\e+W(\al,\tilde x);\al\biggr) d\al + O(\e).
\end{split}
\ee
Similarly to \eqref{psi1def}, introduce
\be\label{shifted-psi}\begin{split}
\psi_2(s,y)&:=B_1(\al)\psi(\tilde s_1+P'_y(Y(\al,0))W(\al,\tilde x),y+W(\al,\tilde x);\al),\\
\al&=\al(0,s_2),s=(\tilde s_1,s_2).
\end{split}
\ee
The point $\tilde x$ is fixed, so we do not need to list it in the arguments of $\psi_2$. Clearly, $\psi_2$ satisfies the same properties (1)--(3) as $\psi$. Hence Lemma~\eqref{key-ass-res} applies to $\psi_2$ as well, and we get similarly to \eqref{jlst-v3}, \eqref{psi1def}, and \eqref{jlst1a-v2}:
\be\label{jlst1a-v2-alt}\begin{split}
\lim_{\e\to0}J_\e^{(1a)}(\xh)&=
\int_{\br^2} \left[\int_{[0,1]^3}\psi_2\left(\tilde s_1,v;\al(0,s_2)\right)dv\right] d\tilde s_1 ds_2\\
&=
\int_{\br^2} B_2(s_2)\left[\int_{[0,1]^3}\psi\left(\tilde s_1,v;\al(0,s_2)\right)dv\right] d\tilde s_1 ds_2.
\end{split}
\ee
Here we have used that the integrals with respect to $\tilde s_1$ and $v$ are unaffected by the constant (with respect to $\tilde s_1$ and $v$) shifts in \eqref{shifted-psi}. Therefore, \eqref{jlst1a-v4} holds with $J_\e^{(1a)}(\xh)$ on the left.

To find the limit of $J_\e^{(1b)}(\xh)$, consider the key step in \eqref{jlst-pb-lim}:
\be\label{jlst-pb-xh}
\sum_j L_+(\e j;\xh) \ik\left(\frac{Y(\al,0;\xh)+y-\e j}\e\right)
=L_+(Y(\al,0;\xh)+y;\xh),
\ee
which is rewritten with $x_0$ replaced by $\xh$. This equality holds everywhere except in an $O(\e)$ neighborhood of $\Gamma$ ($=\Gamma_{x_0}$). Here we use that the curve $\Gamma_{\xh}$, which is obtained by solving $P(Y(\al,0;\xh))=0$, depends smoothly on $\xh$, and $\text{dist}(\Gamma_{\xh},\Gamma)=O(\e)$ (see the argument preceding the statement of Theorem~\ref{thm:remote}). Similarly to \eqref{jlst-pb-lim}, the integrand is uniformly bounded, and we get
\be\label{jlst-pb-lim-xh}
\lim_{\e\to0}J_\e^{(1b)}(\xh)=\int_{S_+^2}B_1(\al) P'_y(Y(\al,0))\cdot Y''_{tt}(\al,0) \theta(P(Y(\al,0)))d\al.
\ee
The fact that the limits of $J_\e^{(1c)}(\xh)$ and $J_\e^{(2)}(\xh)$ as $\e\to0$ are independent of $\tilde x\in\tilde\us$ can be established in a similar way, and the theorem is proven.
\end{proof}

\section{Numerical experiment}\label{numerix}

We start by constructing an interpolation kernel with the required properties. To obtain $\varphi$, we first obtain an interpolation kernel $\varphi_{\text{1D}}$ that has properties IK1--IK5 in $\br$, and then extend it to $\br^3$ in a separable fashion. To obtain $\varphi_{\text{1D}}$ we use the result of \cite{kat19a}, where such a kernel is obtained following the method in \cite{btu03}:
\be\label{final-form}
\varphi_{\text{1D}}(t)=0.5(B_3(t)+B_3(t-2))+4B_3(t-1)-2(B_4(t)+B_4(t-1)).
\ee  
Here $B_n$ is the cardinal B-spline of degree $n$ supported on $[0,n+1]$. 
Then the kernel $\varphi$ becomes
\be\label{vp-form}
\varphi(y)=\prod_{k=1}^3\varphi_{\text{1D}}\left(\frac{y_k}{\Delta_k}+3\right),\ y=(y_1,y_2,y_3),
\ee  
where $\Delta_k$ is the data stepsize along the $k$-th axis. For simplicity, in this paper all the $\Delta_k$ are equal, i.e. $\Delta_k=\e$, $k=1,2,3$. 

The GRT we consider here integrates a function supported in the half-space $x_3>0$ over spheres that are tangent to the plane $x_3=0$. The family of such spheres is three-dimensional. We parametrize the spheres (and, consequently, the GRT) by the coordinates of their center $y$. Thus, the surfaces $\s_y$ are spheres, and the defining function $\Phi$ in \eqref{grt_1} becomes:
\be\label{Phi-ex}
\Phi(x,y):=y_3^2-(x_1-y_1)^2-(x_2-y_2)^2-(x_3-y_3)^2=0.
\ee  
Clearly,
\be\label{Phi-ders-ex}
\Phi'_x(x,y)=2(y-x),\ \Phi'_y(x,y)=2(x_1-y_1,x_2-y_2,x_3).
\ee  

The test object is the ball with center $x_c=(0,0,11)$, radius $R=5$, and uniform density 1. The point on the boundary $x_0$, in a neighborhood of which we compute resolution, is given by
\be\label{point}
x_0=x_c-R\al_0,\ \al_0=(\sin(0.2\pi)\cos(0.7\pi),\sin(0.2\pi)\sin(0.7\pi),\cos(0.2\pi)).
\ee
In agreement with our convention, $\al_0$ points into the interior of the ball.

There can be two spheres that are tangent to the ball at $x_0$. As an example, we consider the sphere whose center $y_0$ satisfies $(x_0-y_0)\cdot\al_0>0$. Thus, for reconstruction near $x_0$ we use the data in a neighborhood of $y_0$. With this choice of $y_0$, the condition $\Phi'_x(x_0,y_0)/|\Phi'_x(x_0,y_0)|=-\al_0$ (see the text following \eqref{rotate-xy}) is satisfied with $\Phi$ given by \eqref{Phi-ex}. For the selected $x_0$, $\al_0$, and $y_0$, we compute using 
\eqref{Phi-ders-ex}:
\be\label{chi-num}
\mc=|\Phi'_x|/|\Phi'_y|=0.526.
\ee

To compute the GRT, we use the formula for the area of the spherical cap:
\be\label{sph-cap}
A=	2\pi R h,
\ee
where $R$ is the radius of the sphere, and $h$ is the height of the cap. The values of $R$ and $h$ can be computed once the center of the sphere $\s_y$ is chosen (e.g., $R=y_3$). To simulate discrete data, the GRT is computed at the points $y=r+\e j$. The interpolated data $g_\e$ is computed using \eqref{f-int}, where the kernel is given by \eqref{vp-form} with $\Delta_k=\e$, $k=1,2,3$.

To apply the inversion formula \eqref{inv-formula}, we numerically integrate $g_\e$ over a neighborhood of $\al_0$ on the unit sphere. To compute $(\pa/\pa t)^2 g_\e(Y(\al,t;x))$ at $t=0$ we use \eqref{f-int} and the chain rule as in \eqref{ikderive}. Given $x$, $\al$, and $t$, the center of the sphere containing the point $x+t\al$ and normal to $\al$ at that point (cf. the paragraph following \eqref{f-int}) is easily found to be:
\be\label{y-sol}
Y(\al,t;x)=(x+t\al)-\frac{x_3+t\al_3}{1+\al_3}\al.
\ee
Consequently,
\be\label{y-sol-ders}
\left.\frac{\pa}{\pa t}Y(\al,t;x)\right|_{t=0}=\frac{1}{1+\al_3}\al,\ \left.\frac{\pa^2}{\pa t^2}Y(\al,t;x)\right|_{t=0}=0,
\ee
and 
\be\label{ikderive-2}\begin{split}
&\left.\left(\frac{\pa}{\pa t}\right)^2 \ik\left(\frac{Y(\al,t;x)-\e j}\e\right)\right|_{t=0} d\al
=\frac1{\e^2}\frac{1}{(1+\al_3)^2}\sum_{i,k=1}^3\ik''_{ik}\left(\frac{Y(\al,0;x)}\e-j\right)\al_i\al_k.
\end{split}
\ee

The cut-off function $\chi$ in \eqref{inv-formula} is constructed as follows. Let $\al^\perp$ run through the unit sphere in the plane $\al_0^\perp$. Then any $\al\in S_+^2$ ($\al_0$ is the North pole of $S_+^2$) can be represented in the form $\al=(\cos\om) \al_0+(\sin\om)\al^\perp$, $0\le\om\le \pi/2$. In the code we use
\be\label{cutoff}
\chi(\al)=\begin{cases}1,& 0\le\om < 0.8\om_{\text{mx}},\\
\frac{1+\cos((\om-0.8\om_{\text{mx}})/(0.2\om_{\text{mx}}))}2,& 0.8\om_{\text{mx}}\le\om < \om_{\text{mx}},\\
0,&\om \ge \om_{\text{mx}}.
\end{cases}
\ee

Finally, the predicted response is computed using \eqref{main-res}. The results corresponding to $\e=0.01$ are shown in Figure~\ref{fig:01}. We see a good match between the predicted and actual transition curves.

%The results corresponding to $\e=0.02$, $\e=0.015$, and $\e=0.01$ are shown in Figures~\ref{fig:02}, \ref{fig:015}, and \ref{fig:01}, respectively. Our goal is to study how singularities are recovered, so the reconstructed curve is shifted by a constant value to match the predicted behavior at $h=0$. The $x$-axes in these figures show the $h$-values. The $y$-axes show the (shifted) reconstructed values at the points $\xh=x_0+\e\al_0h$ (cf. \eqref{rec-pt}) as well as the predicted transition curve. We see that the predicted and actual transition curves become quite close when $\e$ goes down from $0.02$ to $0.015$ (see Figures~\ref{fig:02} and \ref{fig:015}). The difference between the two fails to become even smaller when $\e$ goes down to $0.015$ (see Figures~\ref{fig:02} and \ref{fig:015}) because of the onset of numerical instabilities.

\begin{figure}[h]
{\centerline{\epsfig{file=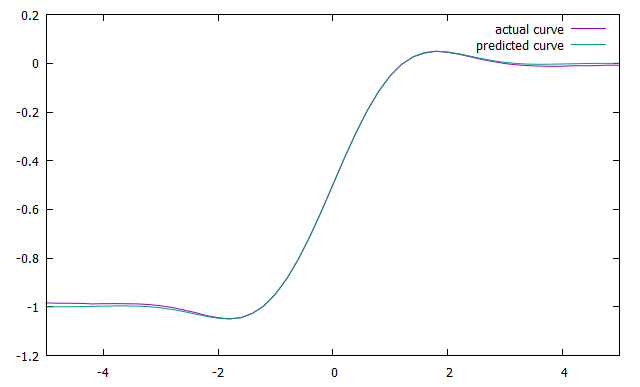}.png, width=10cm}}
}
\caption{Comparison of the predicted and actual transition curves for $\e=0.01$.}
\label{fig:01}
\end{figure}

\bibliographystyle{plain}
\bibliography{bibliogr_A-K,bibliogr_L-Z}

\begin{thebibliography}{10}

\bibitem{ahw12}
F.~Andersson, M.~V.~De Hoop, and H.~Wendt.
\newblock {Multiscale Discrete Approximation of Fourier Integral Operators}.
\newblock {\em Multiscale Modeling and Simulation}, 10:111--145, 2012.

\bibitem{belk}
G.~Beylkin.
\newblock The inversion problem and applications of the generalized {Radon}
  transform.
\newblock {\em Comm. Pure and Appl. Math.}, 37:579--599, 1984.

\bibitem{btu03}
T.~Blu, P.~Th{\'e}venaz, and M.~Unser.
\newblock {Complete Parameterization of Piecewise-Polynomial Interpolation
  Kernels}.
\newblock {\em IEEE Transactions on Image Processing}, 12:1297--1309, 2003.

\bibitem{cdy07}
E.~Candes, L.~Demanet, and L.~Ying.
\newblock Fast computation of {Fourier} integral operators.
\newblock {\em SIAM Journal on Scientific Computing}, 29:2464--2493, 2007.

\bibitem{cdy09}
E.~Candes, L.~Demanet, and L.~Ying.
\newblock A fast buttery algorithm for the computation of {Fourier} integral
  operators.
\newblock {\em SIAM Multiscale Modeling and Simulation}, 7:1727--1750, 2009.

\bibitem{cb15}
M.~Cheney and B.~Borden.
\newblock {Synthetic Aperture Radar Imaging}.
\newblock In O.~Scherzer, editor, {\em Handbook of Mathematical Methods in
  Imaging}, pages 763--799. Springer, New York, NY, 2015.

\bibitem{gkqr18}
C.~Grathwohl, P.~Kunstmann, E.~T. Quinto, and A.~Rieder.
\newblock {Approximate inverse for the common offset acquisition geometry in 2D
  seismic imaging}.
\newblock {\em Inverse Problems}, 34, 2018.
\newblock article id 014002.

\bibitem{hor}
L.~Hormander.
\newblock {\em The Analysis of Linear Partial Differential Operators, {Vol I}}.
\newblock Springer Verlag, New York, 1983.

\bibitem{kat10b}
A.~Katsevich.
\newblock An accurate approximate algorithm for motion compensation in
  two-dimensional tomography.
\newblock {\em Inverse Problems}, 26, 2010.
\newblock article ID 065007 (16 pp).

\bibitem{kat_2017}
A.~Katsevich.
\newblock A local approach to resolution analysis of image reconstruction in
  tomography.
\newblock {\em SIAM Journal on Applied Mathematics}, 77:1706--1732, 2017.

\bibitem{kat19a}
A.~Katsevich.
\newblock Analysis of reconstruction from discrete radon transform data in
  $\mathbb r^3$ when the function has jump discontinuities.
\newblock {\em SIAM Journal on Applied Mathematics}, 2019.
\newblock to appear.

\bibitem{kk15}
P.~Kuchment and L.~Kunyansky.
\newblock {Mathematics of Photoacoustic and Thermoacoustic Tomography}.
\newblock In O.~Scherzer, editor, {\em Handbook of Mathematical Methods in
  Imaging}, pages 1117--1167. Springer, New York, NY, 2015.

\bibitem{KN_06}
L.~Kuipers and H.~Niederreiter.
\newblock {\em Uniform Distribution of Sequences}.
\newblock Dover Publications, Inc., Mineola, NY, 2006.

\bibitem{nat3}
F.~Natterer.
\newblock {\em {The Mathematics of Computerized Tomography}}.
\newblock SIAM, Philadelphia, 2001.

\bibitem{qrs11}
E.~T. Quinto, A.~Rieder, and Th. Schuster.
\newblock Local inversion of the sonar transform regularized by the approximate
  inverse.
\newblock {\em Inverse Problems}, 27, 2011.
\newblock article id 035006.

\bibitem{rz2}
A.G. Ramm and A.I. Zaslavsky.
\newblock Reconstructing singularities of a function given its {Radon}
  transform.
\newblock {\em Math. and Comput. Modelling}, 18(1):109--138, 1993.

\bibitem{rz1}
A.G. Ramm and A.I. Zaslavsky.
\newblock Singularities of the {Radon} transform.
\newblock {\em Bull. Amer. Math. Soc.}, 25:109--115, 1993.

\bibitem{stef18}
P.~Stefanov.
\newblock Semiclassical sampling and discretization of certain linear inverse
  problems.
\newblock arXiv:1811.01240, 2018.

\bibitem{tkk18}
F.~Terzioglu, P.~Kuchment, and L.~Kunyansky.
\newblock Compton camera imaging and the cone transform: a brief overview.
\newblock {\em Inverse Problems}, 34, 2018.
\newblock article id 054002.

\bibitem{trev2}
F.~Treves.
\newblock {\em {Introduction to Pseudodifferential and Fourier Integral
  Operators. Volume 2: Fourier Integral Operators}}.
\newblock The University Series in Mathematics. Plenum, New York, 1980.

\bibitem{wa15}
K.~Wang and M.~A. Anastasio.
\newblock {Photoacoustic and Thermoacoustic Tomography: Image Formation
  Principles}.
\newblock In O.~Scherzer, editor, {\em Handbook of Mathematical Methods in
  Imaging}, pages 1081--1116. Springer, New York, NY, 2015.

\bibitem{yang15}
H.~Yang.
\newblock {\em {Oscillatory Data Analysis and Fast Algorithms for Integral
  Operators}}.
\newblock PhD thesis, Stanford University, 2015.

\bibitem{zwor12}
M.~Zworski.
\newblock {\em Semiclassical analysis}, volume 138 of {\em Graduate Studies in
  Mathematics}.
\newblock American Mathematical Society, Providence, RI, 2012.

\end{thebibliography}
\end{document}